\newtheorem{thm}{Theorem}
 \newtheorem{lem}{Lemma}
  \newtheorem{rem}{Remark}
  \newtheorem{Corollary}{Corollary}
\title{ {\normalsize\tt\hfill\jobname.tex}\\
Yet again on iteration improvement for averaged expected cost control for 1D ergodic diffusions\\~\\
S.V.~Anulova\footnote{Institute for Information Transmission Problems, Moscow, Russia; email: anulova @ mail.ru} \footnote{For the first author this research has been supported by the 
Russian Foundation for Basic Research 
grant no. \mbox{17-01-00633$\_$a}}, H.~Mai\footnote{CREST and ENSAE ParisTech, France; email: hilmar.mai @ gmail.com} \footnote{The second author thanks the Institut Louis Bachelier for financial support.}, 
A.Yu.~Veretennikov\footnote{University of Leeds, UK, \& National Research University Higher School of Economics, \& Institute for Information Transmission Problems, Moscow, Russia; email: a.veretennikov @ leeds.ac.uk}  \footnote{The third author 
is grateful to the financial support by the DFG through the CRC 1283 ``Taming uncertainty and profiting from randomness
and low regularity in analysis, stochastics and their applications'' at
Bielefeld University during his stay there in August 2017; also, for this author
this study has been funded by the Russian Academic Excellence Project '5-100' and by the 
Russian Foundation for Basic Research grant no. \mbox{17-01-00633$\_$a}. All the authors gratefully acknowledge the support and hospitality of the Oberwolfach Research Institute for Mathematics (MFO) during the RiP programme in June 2014 where this study was initiated. }
}
\begin{document}

\maketitle




\begin{abstract}
 An ergodic Bellman's (HJB) equation is proved for a uniformly ergodic 1D controlled diffusion with variable diffusion and drift coefficients both depending on control; convergence of the values provided by Howard's reward improvement algorithm to the value which is a component of the unique solution of Bellman's equation is established. 
\end{abstract}



\section{\bf Introduction}
The paper is a complete version of the short presentation without detailed proofs in \cite{amv17}. Issues of reliability which was in the title of \cite{amv17}  are not addressed here, all proofs are completed and the results are extended in comparison to the cited article. However, an application to reliability seems  fruitful and is one of the motivations for the present paper; a corresponding remark about it can be found below.
One more motivation is to allow the diffusion coefficient to depend on control. Indirectly, the main result below may be considered as a version of a rigorous realisation of the rather instructive and deliberately non-rigorous example from \cite[Ch. 1, \S 1]{Krylov77} where the point was the vanishing at infinity of the expectation of a current cost. Beside a more detailed calculus in step 3 of the proof, here we tackle  the issue of the HJB equation(s) satisfied everywhere and/or almost everywhere  more precisely 
than in \cite{amv17}.

We consider a one-dimensional stochastic differential equation (SDE) on the probability space $(\Omega,\mathcal{F},(\mathcal{F}_t),P)$ with a one-dimensional $(\mathcal{F}_t)$ Wiener process $B=(B_{t})_{t\geq0}$ with coefficients $b$ and $\sigma$, and with  a stationary control function $\alpha$ (called strategy in the sequel)
\begin{align}\label{sde}
dX^\alpha_{t} & =b(\alpha(X^\alpha_t),X^\alpha_{t})\, dt+\sigma(\alpha(X^\alpha_t), X^\alpha_t)\, dW_{t}, \quad t\ge 0, \nonumber \\ \\  \nonumber 
X^\alpha_{0} & =x. \nonumber
\end{align}

Let a  compact
set  $U\subset\mathbb{R}$ 
be a set where any strategy takes its values. 
The functions $b$ and $\sigma$ on   $U\times\mathbb{R}$ are assumed Borel; later on some further conditions will be imposed, but we note straight away that $\sigma$ will be assumed non-degenerate and that a weak solution of the equation (\ref{sde}) always exists and is Markov and strong Markov, see \cite{Krylov69, Krylov72, Krylov73}. Denote the class of all Borel functions $\alpha$ with values in $U$ by $\cal A$. 
For $u\in U$  and $\alpha(\cdot) \in {\cal A}$ denote 
\[
L^u (x) = 
b(u,x)\frac{d}{dx}\, +\frac12\sigma^2(u,x)\frac{d^2}{dx^2}, 
\quad x\in \mathbb{R}, 
\]
and
\[
L^\alpha (x) = 
b(\alpha(x),x)\frac{d}{dx}\, +\frac12\sigma^2(\alpha(x),x)\frac{d^2}{d x^2},\quad x\in \mathbb{R}.
\]

Denote by ${\cal K}$ the class of functions on $U\times \mathbb R$ (also just on $\mathbb R$) growing no faster than some polynomial. The {\em running cost} function $f$ will always be chosen from this class. The {\em averaged cost} function corresponding to the strategy $\alpha \in {\cal A}$ is then defined as 
\begin{equation}\label{eq0}
\rho^{\alpha}(x):=\limsup_{T\to\infty} \, \frac1{T}\,
\int_{0}^{T}\mathbb{E}_{x}f({\alpha(X_{t}^{\alpha})},X_{t}^{\alpha})\, dt.
\end{equation}
For a strategy $\alpha \in {\cal A}$ the function $f^\alpha:\mathbb{R}\to \mathbb{R},\; f^\alpha(x)=f(\alpha (x), x), \; x\in\mathbb{R}$, is defined. Then  (\ref{eq0}) has an equivalent form
\begin{equation}\label{eq01}
\rho^{\alpha}(x)=\limsup_{T\to\infty} \, \frac1{T}\,
\int_{0}^{T}\mathbb{E}_{x}f^{\alpha}(X_{t}^{\alpha})\, dt.
\end{equation}
Now, the {\em cost function} for the model under consideration  is defined as 
\begin{equation}\label{eq1}
\rho(x):= \inf_{\alpha\in {\cal A}} \limsup_{T\to\infty} \, \frac1{T}\,
\int_{0}^{T}\mathbb{E}_{x}f^{\alpha}(X_{t}^{\alpha})\, dt.
\end{equation}
It will be assumed that for every $\alpha\in {\cal A}$ the solution of the equation (\ref{sde}) $X^\alpha$ is Markov ergodic, i.e., there exists a limiting in total variation distribution $\mu^{\alpha}$ of $X_t^\alpha,\; t\to \infty$, this distribution $\mu^{\alpha}$ does not depend on the initial condition $X_0 = x\in \mathbb R$, is unique and is invariant for the generator $L^\alpha$. The cost function $\rho^{\alpha}$ then does not depend on $x$ and can be rewritten as 
\begin{equation}\label{eq00}
\rho^{\alpha}(x)\equiv \rho^{\alpha} := \int f^{\alpha}(x_{}^{})\,\mu^{\alpha}(dx) =:
\langle f^{\alpha},\mu^{\alpha}\rangle.
\end{equation}
Then what we want to find (compute) is the value
\begin{equation}\label{eq2}
\rho:= \inf_{\alpha\in {\cal A}}
\int f^{\alpha}(x_{}^{})\,\mu^{\alpha}(dx)= \inf_{\alpha\in {\cal A}}
 \langle f^{\alpha},\mu^{\alpha}\rangle.
\end{equation}
For any strategy $\alpha \in {\cal A}$ let us also define an auxiliary function 
\[
v^{\alpha}(x):= \int_0^\infty \mathbb E_x  (f^{\alpha}(X^{\alpha}_{t}) - \rho^\alpha)\, dt.
\]
The convergence of this integral will follow from the assumptions. 

~

{\em The first goal} of this paper is to show  
the {\em ergodic HJB or Bellman's equation} on the pair $(V,\rho)$
\begin{equation}\label{ehjb}
\inf_{u\in U} [L^u V(x) + f^u(x) - \rho]=0, \quad x\in \mathbb R. 
\end{equation}
This assumes showing uniqueness of the second component ($\rho$) along with the property that it coincides with the cost from (\ref{eq2}). The meaning of the first component $V$ will be explained later.
The uniqueness of $V$ will be shown up to an additive constant.
 
The class where the solution $(V,\rho)$ will be studied is the family of all Borel functions $V$ and constants $\rho \in \mathbb R$ such that $V$ has two Sobolev derivatives which are all locally integrable in any power, and $V$ itself should have a moderate grow at infinity not faster than some polynomial. Respectively, the equation (\ref{ehjb}) is to be understood almost everywhere; yet, in the 1D situation and under our assumptions it will follow straightforwardly that this equation is actually  satisfied for all $x\in \mathbb R$. Note that the first derivative can be considered as continuous (due to the embedding theorems), and the second derivative will be always taken Borel, as one of the Borel representatives of Lebesgue's measurable function.

~

{\em The second goal} of the paper is to show how to approach the solution $\rho$ of the main problem by some successive approximation procedure called the Reward   Improvement Algorithm (RIA). 
It is interesting that under our minimal assumptions on regularity of strategies for the weak SDE solution setting it is yet possible to justify a {\em monotonic convergence}  
of the ``exact'' RIA; 
compare to \cite[ch.1, \S 4]{Krylov77} where it was  necessary 
to work with ``approximate'' RIA (called Bellman--Howard's iteration procedure there) and with regularized Lipschitz strategies.

Concerning the equation (\ref{ehjb}), it may look like it lacks some boundary conditions: indeed, a 2nd order PDE normally does require certain boundary conditions, which, for example, in the considered 1D case simply means two boundary conditions at two end-points if the equation is on a bounded interval. However, this is the equation ``in the whole space'' and we are going to solve it in a specific class of functions $V$ -- namely, bounded (if $f$ is assumed bounded), or, at most, moderately growing (if $f$ may admit some moderate growth),  -- which in some sense substitutes the (Dirichlet) boundary conditions at $\pm \infty$. Note that a similar situation can be found in the theory of  Poisson equations in the whole space (see, for example, \cite{PV, Ver2011}).

Concerning a full uniqueness for the solution of (\ref{ehjb}), note that with any solution $(V, \rho)$ and for any constant $C$, the couple $(V + C, \rho)$ is also a solution. There are two close enough options how to tackle this fact: either accept that uniqueness will be established up to a constant, or choose a certain ``natural'' constant satisfying some ``centering condition'' as will be done below.

~

To guarantee ergodicity, we will assume the  ``blanket'' recurrence conditions (see below), which in some sense provide a uniform recurrence for {\em any} strategy. Conditions of this type are sometimes considered too restrictive; however, they do allow  to include  models and cases not covered earlier in this theory and  for this reason we regard this restriction as a  reasonable price for the time being. It is likely that such restrictions may be relaxed so as to include the ``near monotonicity'' type conditions (see \cite{Borkar2012}).

Let us say just a few words about the history of the problem. More can be found in the references provided below.  
Earlier results on ergodic control in continuous time were obtained in \cite{Mandl}, \cite{Morton71}, \cite{Borkar87}, et al. 
In his book \cite{Mandl} Mandl established apparently first results on ergodic (averaged) control for controlled 1D diffusion on a finite interval with boundary conditions  including jumps from the boundary. The author established the HJB equation and proved uniqueness of the couple (up to a constant for the first component). Improvement of control was discussed, too, however, without convergence.

Morton \cite{Morton71} considered the 1D case (a multi-dimensional case too but  under stronger assumptions: we do not touch it in this paper) with a price function defined by (\ref{eq2}) without any relation to (\ref{eq1}). He proved (\cite[Theorem 1]{Morton71}) that the optimal price does satisfy the ergodic Bellman's equation; that the policy determined by Argmax (in our setting Argmin) in the Bellman's equation is optimal within some rather special class of Markov policies which are fixed functions outside some bounded interval; a certain inequality for the optimal price and any solution of Bellman's equation; a remark about RIA; however, neither is the uniqueness for the Bellman's equation solutions established, nor is the convergence of RIA towards a solution proved. 

Discrete time controlled models were considered in the monographs \cite{DynkinYushkevich}, \cite{Howard60},  \cite{Howard71},  \cite{Puterman},  and others, and in the papers \cite{ari1997},  \cite{meyn1997},  \cite{Rykov17}, etc. 

Continuous time controlled processes were treated in the 80s in a chapter of the monograph \cite{Borkar87} where ergodic control for stable diffusions was considered. Arapostathis and Borkar \cite{ari2011}, 
Arapostathis  \cite{ari2012}, Arapostathis,  Borkar and Ghosh \cite{Borkar2012} treated diffusions with ``relaxed control'' and the  diffusion coefficient not depending on the control, under weaker recurrence assumptions (i.e., under two types of condition, stable or near-monotone). In this setting, they establish  Bellman's equation, existence, uniqueness, and RIA convergence. In this paper  we allow the diffusion coefficient to depend on  control and we do not use relaxed control. 

The latest works include  \cite{ari2012}, \cite{Borkar2012}, \cite{Rykov17}, see also the references therein. Although devoted to another type of models -- piecewise-linear Markov ones -- the monograph \cite{CostaDufour} may also be mentioned here.  
In the very first papers and books compact cases with  some auxiliary boundary conditions -- so as to simplify ergodicity -- were studied; convergence of the improvement control algorithms were studied only partially. In later investigations noncompact spaces are allowed; however, apparently, {\em ergodic} control in the diffusion coefficient $\sigma$ of the process has not been tackled earlier. The reader may consult  \cite{Borkar87} and \cite{Krylov77} for research on controlled diffusion processes on a finite horizon, or on infinite horizon with discount (technically equivalent to killing). 

In most of the works on the topic, measurability of the optimal or improved strategy (see below) is assumed. Yet, it is a subtle issue and in our case we give  references -- the basic one is \cite{Stschegolkow} -- and verify the conditions which provide this measurability. 

The paper consists of four sections: 1 -- Introduction,  2 -- Assumptions and some auxiliaries, 3 -- Main result and its proof, and the last one is the Appendix (not numbered).
We will use the convention that arbitrary constants $C$ in the calculus may change from line to line. 

\section{Assumptions and some auxiliaries}\label{sec:aa}

To ensure ergodicity of $X^\alpha$ under any stationary control strategy $\alpha\in {\cal A}$, we make the following assumptions on the drift and diffusion coefficients. 
\begin{enumerate}
\item[(A1)] (boundedness, non-degeneracy, regularity) The functions $b$ and $\sigma$ are Borel bounded in their variables; 
$|b(u,x)|\le C_b$, 
$|\sigma(u,x)|\le C_\sigma$,  $\sigma$ is uniformly  non-degenerate, $|\sigma(u,x)|^{-1}\le C_\sigma$; 
the functions $\sigma(u,x)$, $b(u,x)$, $f^u(x)$ are continuous in $u$ for every $x$. 

\item[(A2)] (recurrence)
\begin{equation}
\lim_{|x|\to \infty}\sup_{u\in U}\, x \,b(u,x)= -\infty.
\label{eq:stability}
\end{equation}

\item[(A3)]  (running cost) The function \(f\)  belongs to the class $\cal K$ of functions 
which 
are Borel measurable in $x$ for each $u$ and admit a uniform in $u$ polynomial bound: there exist constants $C_1, m_1>0$ such that for any $x$, 
\[
\sup_{u\in U} |f^u(x)| \le C_1(1+|x|^{m_1}).
\]

\item[(A4)] (compactness of $U$) The set $U$ is compact.

\medskip

\item[(A5)] (additional regularity)
The functions $b$, $\sigma$, and $f$ are of the class $C^1$ in $x$  for each $u$ with uniformly bounded derivatives.

\end{enumerate}

~

\noindent
We will need the following three lemmata. 
\begin{lem}\label{lem2}
Let the assumptions (A1) -- (A3) hold true. Then 
\begin{itemize}
\item
For any $C_1, m_1 > 0$ there exist $C, m>0$ such that for any strategy $\alpha \in {\cal A}$ and for any function $g$ growing no faster than $C_1 (1+ |x|^{m_1})$, 
\begin{equation}\label{el2}
\sup_{t\ge 0} |\mathbb E_x g(X^\alpha_t)| \le C(1+|x|^m).
\end{equation}

\item
For any $\alpha\in {\cal A}$, the invariant measure $\mu^\alpha$ integrates any polynomial and 
$$
\sup_{\alpha \in {\cal A}}\int |x|^k \, \mu^\alpha(dx) < \infty, \quad \forall \; k>0.
$$

\item
For any strategy $\alpha \in {\cal A}$ the function $\rho^\alpha$ is a constant, and 
\begin{equation}\label{rhoa}
 \sup_{\alpha\in {\cal A}} |\rho^\alpha| \le C <\infty;
\end{equation}
moreover, for any $k>0$ and $f \in {\cal K}$, 
there exist $C,m>0$ such that 
\begin{equation}\label{rhoc}
\sup_{\alpha\in {\cal A}} 
\left|\mathbb{E}_{x}f^{\alpha}(X_{t}^{\alpha})- \rho^\alpha \right| 
\le C\frac{1+|x|^m}{1+t^k},
\end{equation}
and 
\begin{equation}
 \sup_{\alpha\in {\cal A}}\left|\frac1{T}\,
\int_{0}^{T}\mathbb{E}_{x}f^{\alpha}(X_{t}^{\alpha})\, dt - \rho^\alpha \right| \to 0, \quad T\to\infty. 
\end{equation}

\end{itemize}
\end{lem}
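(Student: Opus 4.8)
The plan is to derive everything from a single quantitative recurrence estimate based on a Lyapunov function, uniform over all strategies. First I would fix, using (A2), a radius $R$ and constants $c_0>0$, $C_0$ such that $\sup_{u\in U} x\,b(u,x) \le -c_0|x|$ for $|x|\ge R$. For a fixed large $k$ I would take the test function $W_p(x)=(1+x^2)^{p/2}$ and compute $L^u W_p(x)$; using (A1) (boundedness of $\sigma$, hence $\frac12\sigma^2 W_p''$ is controlled by $C(1+|x|^{p-2})$) together with the drift bound, one gets the dissipativity inequality $\sup_{u}L^u W_p(x)\le -\gamma W_p(x)+C$ for all $x$, with constants $\gamma>0$, $C$ depending only on $p$ and the structural constants, not on $\alpha$. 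By Dynkin's formula (justified by a localization/stopping argument, the stochastic integrals being martingales after stopping because $\sigma$ is bounded) this yields $\mathbb{E}_x W_p(X^\alpha_t)\le W_p(x)e^{-\gamma t}+C/\gamma$, which is exactly the first bullet: for $g$ with $|g(x)|\le C_1(1+|x|^{m_1})$ pick $p\ge m_1$, so $|g|\le C' W_p$ and $\sup_{t\ge0}|\mathbb{E}_x g(X^\alpha_t)|\le C(1+|x|^{p})$, uniformly in $\alpha$.

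For the second bullet I would pass to the limit $t\to\infty$ in $\mathbb{E}_x W_p(X^\alpha_t)\le W_p(x)e^{-\gamma t}+C/\gamma$: since by the standing ergodicity assumption $X^\alpha_t$ converges in total variation to $\mu^\alpha$, and $W_p$ is nonnegative and lower semicontinuous, Fatou's lemma gives $\int W_p\,d\mu^\alpha\le C/\gamma$, and since $p$ was arbitrary this proves $\sup_\alpha\int|x|^k\mu^\alpha(dx)<\infty$ for every $k$. That $\rho^\alpha$ is constant and representable as $\langle f^\alpha,\mu^\alpha\rangle$ is already recorded in the text (equations (\ref{eq00})), and then $|\rho^\alpha|=|\langle f^\alpha,\mu^\alpha\rangle|\le C_1\int(1+|x|^{m_1})\mu^\alpha(dx)\le C$ uniformly, giving (\ref{rhoa}). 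The last two displays, (\ref{rhoc}) and the Cesàro convergence, will follow from the first provided one has a uniform-in-$\alpha$ \emph{rate} of convergence of $\mathbb{E}_x f^\alpha(X^\alpha_t)$ to $\rho^\alpha$; I would obtain the polynomial rate $C(1+|x|^m)/(1+t^k)$ from the classical coupling/Lyapunov method for polynomial mixing (à la the references \cite{PV,Ver2011} cited for Poisson equations): the Lyapunov inequality $\sup_u L^u W_p\le -\gamma W_p+C\mathbf 1_{\{|x|\le R\}}$ together with uniform non-degeneracy and boundedness of $\sigma$ (which gives a uniform minorization/Doeblin condition on bounded sets for all $\alpha$) yields exponential — hence in particular arbitrary-polynomial — convergence with constants depending only on the structural bounds. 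Integrating (\ref{rhoc}) in $t$ over $[0,T]$ and dividing by $T$ gives the final Cesàro statement.

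The main obstacle I anticipate is the uniformity over the (merely Borel) class $\mathcal{A}$ in the convergence rate: one must ensure the coupling constants and the minorization measure on the recurrence region $\{|x|\le R\}$ can be chosen independently of $\alpha$. This is where (A1) is essential — uniform ellipticity $|\sigma|^{-1}\le C_\sigma$ and $|\sigma|\le C_\sigma$, plus $|b|\le C_b$, give via Krylov's estimates (the references \cite{Krylov69,Krylov72,Krylov73} already invoked for existence) a lower bound on transition densities over a fixed time step on compacts that does not see $\alpha$ at all; combined with the $\alpha$-independent Lyapunov inequality this closes the argument. The rest is bookkeeping: choosing $p$ large enough relative to $m_1$ and to the target power $k$, and tracking how $m$ in the conclusions depends on $m_1$ and $k$.
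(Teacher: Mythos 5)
Your overall strategy --- an $\alpha$-uniform Lyapunov estimate for $W_p(x)=(1+x^2)^{p/2}$ combined with a minorization on a compact recurrence set --- is exactly the machinery behind the reference the paper actually invokes: the paper's own proof of this lemma is a one-line citation of \cite[Theorems 5, 6]{Ver99} (plus \cite{Krylov69, Krylov72, Krylov73} for weak uniqueness and the strong Markov property), so in effect you are re-deriving that result. The problem is in your very first step. Assumption (A2) states only that $\sup_{u}x\,b(u,x)\to-\infty$; it does \emph{not} give $\sup_u x\,b(u,x)\le -c_0|x|$ for $|x|\ge R$. A bounded drift with $x\,b(u,x)\sim-\sqrt{|x|}$ satisfies (A2) but violates your linear bound. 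Consequently the geometric drift inequality $\sup_u L^uW_p\le-\gamma W_p+C$ is false in general here: what (A2) actually yields is that for every $p$ one can choose $R=R(p)$ so that $\sup_u L^uW_p(x)\le -c\,W_{p-2}(x)+C\mathbf 1_{\{|x|\le R\}}$, a \emph{polynomial} drift condition. The model is only polynomially ergodic, which is precisely why (\ref{rhoc}) is stated with a rate $1/(1+t^k)$ for each $k$ and a prefactor $|x|^{m}$ with $m=m(k)$ growing in $k$, rather than with a single exponential rate; your claim that one gets ``exponential --- hence in particular arbitrary-polynomial --- convergence'' with $\alpha$-independent constants is therefore not available under these hypotheses.

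The gap propagates: the bound $\mathbb E_xW_p(X_t^\alpha)\le W_p(x)e^{-\gamma t}+C/\gamma$, on which your first two bullets rest, is unjustified. It can be repaired --- from $\frac{d}{dt}\mathbb E_xW_p(X_t)\le -c\,\mathbb E_xW_{p-2}(X_t)+C$ and Jensen's inequality (note $W_{p-2}=W_p^{(p-2)/p}$) one obtains $y'\le -c\,y^{(p-2)/p}+C$ for $y(t)=\mathbb E_xW_p(X_t)$, hence $\sup_t\mathbb E_xW_p(X_t)\le\max\bigl(W_p(x),C'\bigr)$, which recovers the first bullet and, via your truncation and Fatou argument, the second --- but as written the step is wrong. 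For (\ref{rhoc}) one then needs the polynomial moment bounds on the coupling time established in \cite{Ver99} rather than a Doeblin-type exponential estimate. Finally, note that Krylov's estimates give \emph{upper} bounds on occupation measures; the $\alpha$-independent \emph{lower} bound on transition densities over compacts that you need for the minorization requires a separate argument (in dimension one, e.g.\ an explicit scale-function computation or a support-theorem argument).
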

{\em Proof}. Follows from \cite[Theorems 5, 6]{Ver99}. Note that in \cite{Ver99} 
the solution of the SDE under investigation should be  weakly unique, and it also must be a homogeneous Markov and strong Markov process; for the equation (\ref{sde}) it is all true by virtue of \cite[Theorem 3]{Krylov69}, \cite{Krylov72},  and \cite[Theorems 2, 3]{Krylov73}, as no continuity of the diffusion coefficient is required for this in the 1D case. (NB: In \cite[Theorem 3]{Krylov73} no continuity is needed even for $D\ge 1$, but then weak uniqueness is established in the 1D case only \cite[Theorem 3]{Krylov69}.)

\begin{Corollary} 
Under the same assumptions,

\begin{equation}\label{bcheb}
\sup_{t\ge 0} |\mathbb E_x 1(|X^\alpha_t| > N)| \le \sup_{t\ge 0} \mathbb E_x \frac{|X^\alpha_t|^m}{N^m} 
\le \frac{C(1+|x|^m)}{N^m}.
\end{equation}
\end{Corollary}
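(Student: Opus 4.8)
The plan is short, since \eqref{bcheb} is an immediate corollary of Lemma~\ref{lem2} combined with Markov's inequality. First I would observe that $\mathbb E_x 1(|X^\alpha_t|>N)=P_x(|X^\alpha_t|>N)\ge 0$, so the outer absolute value on the left-hand side is redundant. For the first inequality I would use the elementary pointwise bound $1(|y|>N)\le |y|^m/N^m$, valid for every $y\in\mathbb R$, every $N>0$ and every fixed $m>0$; applying it with $y=X^\alpha_t(\omega)$, taking $\mathbb E_x$ of both sides, and then passing to the supremum over $t\ge 0$ yields the middle term of \eqref{bcheb}.

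For the second inequality I would invoke the first item of Lemma~\ref{lem2} with the choice $g(y)=|y|^m$, a function that clearly grows no faster than $1+|y|^m$. Lemma~\ref{lem2} then supplies constants $C,m>0$, uniform over all strategies $\alpha\in{\cal A}$ and over $t\ge 0$, such that $\sup_{t\ge 0}\mathbb E_x|X^\alpha_t|^m\le C(1+|x|^m)$; dividing by $N^m$ gives precisely the right-hand side of \eqref{bcheb}. If one wants a faster decay $N^{-p}$ for some prescribed $p>0$, one simply applies Lemma~\ref{lem2} instead to $g(y)=|y|^p$, at the cost of increasing $C$ and the exponent of $|x|$.

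I do not expect any genuine obstacle here: the only point deserving explicit mention is that both ingredients -- the Markov bound and the moment estimate extracted from Lemma~\ref{lem2} -- are uniform in $t\ge 0$ and in $\alpha\in{\cal A}$, which is exactly what Lemma~\ref{lem2} already guarantees. Thus the ``hardest'' part is merely the bookkeeping of the exponents.
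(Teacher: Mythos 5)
Your proposal is correct and matches the paper's argument exactly: the paper proves the corollary in one line by the Bienaym\'e--Chebyshev--Markov inequality combined with the moment bound (\ref{el2}) from Lemma~\ref{lem2}, which is precisely your two-step reasoning. The only bookkeeping point worth noting is that the exponent $m$ in (\ref{bcheb}) is the one supplied by Lemma~\ref{lem2}, as you correctly observe.
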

The proof is straightforward by Bienaym\'e -- Chebyshev --Markov's inequality.

\begin{rem}
Note that because $D=1$, under the assumptions (A1)--(A2) for any Borel function $\alpha \in {\cal A}$ there is a unique stationary measure $\mu^\alpha$, which is {\em equivalent to the Lebesgue  measure $\Lambda$}.
The latter follows from the formula for the unique stationary density 
\begin{equation}\label{explicit}
p^\alpha(x) := \frac{d\mu^\alpha(x)}{dx} = 
C_\alpha\frac1{\sigma^2(\alpha(x),x)} \exp\left(2\int_0^x \frac{b(\alpha(y),y)}{\sigma^2(\alpha(y),y)}\,dy\right), 
\end{equation}
where $C_\alpha$ is a normed constant. The fact that $p^\alpha$ is a stationary density can be seen from a substitution to the equation of stationarity $(L^\alpha)^*p=0$ (see, for example, \cite[Lemma 4.16, equation (4.70)]{Khasminskii}); its uniqueness in the class of integrable functions satisfying the normalizing condition  $\int p\,dx = 1$ can be justified via the explicit solution of the stationarity equation in the 1D case which we leave to the readers. 

\end{rem}

In the next Lemma (as well as later in the main Theorem) we use Sobolev spaces $W^{2}_{p, loc}$ with $p>1$. (this notation are taken from \cite[Chapter 2]{lad}, although, in some other sources it is denoted by $W^{2,p}_{loc}$.) Although all main statements can be stated without them, this is done in order to mimick the steps in the proof where these spaces show up naturally due to the direct references, even though the dimension equals one, in which case, of course, some calculus can be simlipified.
\begin{lem}\label{lem1}
Let the assumptions (A1) -- (A3) be satisfied.
Then for any strategy $\alpha \in {\cal A}$ the cost function $v^{\alpha}$ has the following properties:

\begin{enumerate}

\item The function $v^\alpha$ is continuous as well as $(v^\alpha)'$,  and there exist $C,m>0$ both depending only on the constants in (A1)--(A3) such that 
\begin{equation}\label{vineq}
\sup_\alpha (|v^\alpha(x)|  + |v^\alpha(x)'|) \le C (1+|x|^m).
\end{equation}

\item
$v^{\alpha} \in W^{2}_{p, loc}$ for any $p\ge 1$.

\item
$v^{\alpha} \in C^{1,  Lip}$ (i.e., $(v^{\alpha})'$ is locally Lipschitz). 
\item $v^{\alpha}$ satisfies a Poisson equation in the whole space,
\begin{equation}\label{eqva}
L^{\alpha}v^{\alpha}+f^{\alpha} - \langle f^\alpha, \mu^\alpha \rangle \stackrel{}{=} 0,
\end{equation}
in the Sobolev sense; in particular, for almost every $x \in \mathbb R$
\begin{equation}\label{eqvaa}
L^{\alpha}(x)v^{\alpha}(x)+f^{\alpha}(x) - \langle f^\alpha, \mu^\alpha \rangle \stackrel{}{=} 0.
\end{equation}

\item
The solution of the equation (\ref{eqva}) is unique up to an additive constant in the class of Sobolev solutions $W^{2}_{p, loc}$ with any $p>1$ with no more than some (any) polynomial growth of the solution $v^\alpha$.

\item $\langle v^{\alpha},\mu^{\alpha}\rangle =0.$\end{enumerate}
\end{lem}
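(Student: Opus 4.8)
The plan is to realise $v^\alpha$ as the $\lambda\downarrow 0$ limit of the discounted resolvent functions and then to read off all of its regularity from the elliptic equation those functions satisfy. First, finiteness of the integral defining $v^\alpha$ and the bound $\sup_\alpha|v^\alpha(x)|\le C(1+|x|^m)$ in part (1) are immediate from the decay estimate (\ref{rhoc}) of Lemma~\ref{lem2}: taking there $k=2$ gives $\int_0^\infty C\,(1+|x|^m)(1+t^2)^{-1}\,dt=C'(1+|x|^m)$ with $C',m$ not depending on $\alpha$. For $\lambda>0$ put $v^\alpha_\lambda(x):=\int_0^\infty e^{-\lambda t}\,\mathbb E_x\bigl(f^\alpha(X^\alpha_t)-\rho^\alpha\bigr)\,dt$; by (\ref{rhoc}) again $v^\alpha_\lambda(x)\to v^\alpha(x)$ as $\lambda\downarrow 0$ for every $x$, and $\sup_{0<\lambda\le1}\lambda|v^\alpha_\lambda(x)|\le\sup_{t\ge0}|\mathbb E_x(f^\alpha(X^\alpha_t)-\rho^\alpha)|\le C(1+|x|^m)$ by (\ref{el2}) and (\ref{rhoa}).

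By the classical theory of one-dimensional diffusions with bounded, measurable, uniformly non-degenerate coefficients — the same facts quoted for Lemma~\ref{lem2} — each $v^\alpha_\lambda$ belongs to $W^{2}_{p,loc}$ for all $p>1$ and solves $L^\alpha v^\alpha_\lambda-\lambda v^\alpha_\lambda+f^\alpha-\rho^\alpha=0$ a.e. Interior $W^{2}_{p}$ estimates on a fixed ball $B_R(x)$ bound $\|v^\alpha_\lambda\|_{W^{2}_{p}(B_R(x))}$ by $C\bigl(\|v^\alpha_\lambda\|_{L^p(B_{2R}(x))}+\|\lambda v^\alpha_\lambda\|_{L^p(B_{2R}(x))}+\|f^\alpha-\rho^\alpha\|_{L^p(B_{2R}(x))}\bigr)$ with $C$ depending only on the constants in (A1); by the previous paragraph and (A3) the right-hand side is $\le C(1+|x|^m)$ uniformly in $\lambda\le1$ and in $\alpha$. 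Weak $W^{2}_{p}$-compactness together with the pointwise convergence $v^\alpha_\lambda\to v^\alpha$ (and Rellich on bounded sets) then yields $v^\alpha\in W^{2}_{p,loc}$ with the bound in (\ref{vineq}), and since $\lambda v^\alpha_\lambda\to0$ locally in $L^p$, the limit satisfies the Poisson equation (\ref{eqva}), hence (\ref{eqvaa}) a.e.; this is the same scheme as for Poisson equations in the whole space, cf.\ \cite{PV}. This disposes of parts (2) and (4). The one-dimensional Sobolev embedding $W^{2}_{p,loc}\hookrightarrow C^{1,\gamma}_{loc}$ with $\gamma=1-1/p$ gives the continuity of $v^\alpha$ and $(v^\alpha)'$, and combining it with the interior estimate above produces the uniform bound $\sup_\alpha|(v^\alpha)'(x)|\le C(1+|x|^m)$, finishing part (1). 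For part (3), solving (\ref{eqvaa}) for the top-order term gives, a.e., $(v^\alpha)''=-2\sigma^{-2}(\alpha(x),x)\bigl(b(\alpha(x),x)(v^\alpha)'(x)+f^\alpha(x)-\rho^\alpha\bigr)$; by (A1), (A3) and the continuity of $(v^\alpha)'$ the right-hand side is locally bounded, so $(v^\alpha)'$ is locally Lipschitz.

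Part (6) follows from Fubini's theorem, which applies because $\int\!\int_0^\infty|\mathbb E_x(f^\alpha(X^\alpha_t)-\rho^\alpha)|\,dt\,\mu^\alpha(dx)\le\int C(1+|x|^m)\,\mu^\alpha(dx)<\infty$ by Lemma~\ref{lem2}: one gets $\langle v^\alpha,\mu^\alpha\rangle=\int_0^\infty\bigl(\int\mathbb E_x f^\alpha(X^\alpha_t)\,\mu^\alpha(dx)-\rho^\alpha\bigr)\,dt$, and invariance of $\mu^\alpha$ for $L^\alpha$ makes the inner bracket $\langle f^\alpha,\mu^\alpha\rangle-\rho^\alpha$ vanish for every $t$. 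For the uniqueness in part (5), if $w$ is another $W^{2}_{p,loc}$ solution of (\ref{eqva}) of polynomial growth, then $h:=v^\alpha-w$ solves $L^\alpha h=0$ a.e.; in one dimension $h'$ is continuous and absolutely continuous and satisfies the Carathéodory linear ODE $(h')'=-2b(\alpha(\cdot),\cdot)\sigma^{-2}(\alpha(\cdot),\cdot)\,h'$, whence $h'(x)=h'(0)\exp\bigl(-\int_0^x 2b(\alpha(y),y)\sigma^{-2}(\alpha(y),y)\,dy\bigr)$. Condition (A2) makes $b(\alpha(y),y)\le -M/|y|$ for $|y|$ large with $M$ arbitrary, and $\sigma^{-2}\ge C_\sigma^{-2}$, so this exponential grows faster than any power of $x$ both as $x\to+\infty$ and (by the symmetric argument, where $b>0$) as $x\to-\infty$; polynomial growth of $h$ — and hence of $h'$, by the interior estimate — therefore forces $h'(0)=0$, i.e.\ $h\equiv\mathrm{const}$.

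The main obstacle is the step establishing parts (2) and (4): one must identify the probabilistically defined $v^\alpha$ with a genuine Sobolev solution of the Poisson equation on the whole line while keeping every constant uniform over the merely Borel strategies $\alpha$. This forces one to work with the low-regularity (bounded measurable coefficients) one-dimensional elliptic theory and to secure the uniform-in-$\lambda$ a priori bounds before passing to the limit $\lambda\downarrow0$; the remaining parts are then either short ODE and measure-theoretic arguments or direct consequences of Lemma~\ref{lem2} and the Sobolev embedding.
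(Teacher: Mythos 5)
Your argument is correct in substance but follows a genuinely different route from the paper's. The paper removes the diffusion coefficient by a random time change, so that $v^\alpha$ becomes the solution of a Poisson equation $\bar L^\alpha v+\bar f^\alpha=0$ with unit second-order coefficient and merely measurable drift, at which point items (1)--(5) are read off from \cite[Theorem 1]{PV01} (and \cite{Khasminskii} for uniqueness); only item (6) is proved the same way you prove it, via Fubini and invariance. You instead use the vanishing-discount approximation $v^\alpha_\lambda$, uniform-in-$\lambda$ interior $W^2_p$ bounds, and a weak-compactness passage to the limit, and for item (5) you give a self-contained ODE argument: $h'(x)=h'(0)\exp\bigl(-\int_0^x 2b\sigma^{-2}\bigr)$ must grow super-polynomially under (A2) unless $h'(0)=0$. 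Your uniqueness proof is in fact more explicit and more elementary than the paper's citation, and it makes visible exactly where (A2) enters. Two points deserve care, though neither is fatal. First, your ``interior $W^2_p$ estimate with constant depending only on (A1)'' is \emph{not} the general Calder\'on--Zygmund estimate (which would require continuity/VMO of the leading coefficient and fails for merely Borel $\alpha$ in dimension $\ge 2$); it is the one-dimensional trick of solving the equation for $v''_\lambda$ pointwise, $v''_\lambda=2\sigma^{-2}(\lambda v_\lambda-bv'_\lambda-f+\rho)$, plus an interpolation bound on $\|v'_\lambda\|_{L^p}$ -- you should say so explicitly, since this is precisely the obstruction the paper circumvents by its time change before invoking \cite{PV01}. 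Second, the assertion that $v^\alpha_\lambda\in W^2_{p,loc}$ and satisfies the resolvent equation a.e.\ for Borel strategies is the real workhorse and needs a concrete reference (Krylov's estimates and the It\^o--Krylov formula, \cite{Krylov77}, or again \cite{PV01} after the same reduction); ``the same facts quoted for Lemma~\ref{lem2}'' is too loose, as that lemma only supplies ergodicity and moment bounds, not elliptic regularity of the resolvent.
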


\begin{proof}
Firstly, the inequality 
\[
\sup_\alpha |v^\alpha(x)| \le C (1+|x|^m)
\]
follows immediately from (\ref{el2}) and from the assumptions. 

Further, let us use a random change of time in the definition of $v^\alpha$:
\begin{align}\label{newv}
v^{\alpha}(x) = \int_0^\infty \mathbb E_x  (f^{\alpha}(X^{\alpha}_{t}) - \rho^\alpha)\, dt
=  \int_0^\infty \mathbb E_x \bar f^\alpha(\bar X^\alpha_s)\,ds, 
\end{align}
where 
\[
\bar f^\alpha(x)= \frac{f^\alpha(x) - \rho^\alpha}{a^\alpha(x)}, 
\]
and $\bar X^\alpha_s$ is the process $X^\alpha_t$ with a changed time which makes the diffusion coefficient equal to one: 
\[
\bar X^\alpha_t := X^\alpha_{t'(t)}, 
\]
where the function \(t'(t)\) is the inverse to the mapping 
\[
t \mapsto \int_0^t \sigma^{2}(X^\alpha_s)\,ds, 
\]
see \cite[Chapter 2.5]{McKean}, or \cite[Theorem 15.5]{GikhmanSkorokhod}. The process $\bar X^\alpha_t$ satisfies an SDE
\begin{equation}\label{newbb}
d\bar X^\alpha_t = d\bar W_t + \bar b^\alpha(\bar X^\alpha_t)dt, \quad 
\bar b^\alpha(x) = \frac{b^\alpha(x)}{\sigma^2(\alpha(x), x)},
\end{equation}
with a new Wiener process $\displaystyle \bar W_t = \int_0^{t'(t)}\sigma(\alpha(X^\alpha_s), X^\alpha_s)\, dW_{s}$, see the same references \cite[Chapter 2.5]{McKean}, or \cite[Theorem 15.5]{GikhmanSkorokhod}.

Further, it follows from (\ref{newv}) and (\ref{newbb})  that the function $v^\alpha$ is a solution of the equation
\begin{equation}\label{newvhat}
\bar L^\alpha v_{}(x) + \bar f^\alpha(x)=0, 
\end{equation}
where
\[
\bar L^\alpha (x)=\bar b(\alpha(x),x)\frac{d}{dx}\, +\frac12\frac{d^2}{d x^2},\quad x\in \mathbb{R}.
\]
Moreover, the last integral in (\ref{newv}) can only converge if $\langle \bar f^\alpha, \bar\mu^\alpha \rangle=0$, where $\bar\mu^\alpha $ is the unique invariant measure of the Markov diffusion $\hat X^\alpha_t$, since otherwise the integral in the right hand side of (\ref{newv}) diverges.  Existence and uniqueness of such an invariant measure (along with a convergence rate) follows, for example, from \cite[Theorem 5]{Ver99} (among many other possible  references) due to the assumption (A1).
The property $v^{\alpha} \in W^{2}_{p, loc}$ for any $p\ge 1$ and the bound 
\[
\sup_\alpha |(v^\alpha)'(x)| \le C (1+|x|^m)
\]
for some $m>0$
follow both from \cite[Theorem 1]{PV01} due to the equation (\ref{newvhat}). 

Further, given (\ref{vineq}), the bound \(v^{\alpha} \in C^{1,  Lip}\) (which means a local, not global Lipschitz condition for \((v^{\alpha})'\)) follows from the equation (\ref{newvhat}), as \((v^{\alpha})''\) turns out to be locally bounded by virtue of this equation. The same equation(\ref{newvhat})  implies (\ref{eqva}) and (\ref{eqvaa}). Uniqueness of solution for the equation (\ref{newvhat}) and, hence, also for (\ref{eqva}) up to an additive constant follows from 
\cite{PV01}; see also \cite[Lemma 4.13 and Remark 4.3]{Khasminskii}. Finally, the last assertion of the Lemma is due to the Fubini theorem, 
\[
\int v^{\alpha}(x)\mu^\alpha(dx) =\int \int_0^\infty \mathbb E_x  (f^{\alpha}(X^{\alpha}_{t}) - \rho^\alpha)\, dt\mu^\alpha(dx) 
= \int_0^\infty \int\mathbb E_x  (f^{\alpha}(X^{\alpha}_{t}) - \rho^\alpha)\mu^\alpha(dx)\, dt=0, 
\]
by virtue of the absolute convergence
\[
\int \int_0^\infty \mathbb |E_x  (f^{\alpha}(X^{\alpha}_{t}) - \rho^\alpha)|\, dt\mu^\alpha(dx) 
<\infty.
\]

~

\end{proof}

\begin{lem}\label{lem3}
Let the assumptions (A1) -- (A2) hold true. Then $\exists \; 0<C_1<C_2$ such that for any strategy $\alpha$ for the constant $C_\alpha$ from (\ref{explicit}) we have, 
\[
C_1 \le C_\alpha \le C_2. 
\]
Also, for any $k$ there is a constant $C$ such that for every $x$ uniformly in $\alpha$
\[
p^\alpha(x) \le \frac{C}{1+|x|^k},  
\]
and there exist constants $c, \kappa >0$ such that uniformly in $\alpha$
\[
p^\alpha(x) \ge c\exp(-\kappa|x|).  
\]

\end{lem}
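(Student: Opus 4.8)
The plan is to read all three assertions off the explicit stationary density (\ref{explicit}). Write
\[
I_\alpha(x):=2\int_0^x\frac{b(\alpha(y),y)}{\sigma^2(\alpha(y),y)}\,dy,
\]
so that $p^\alpha(x)=C_\alpha\,\sigma^{-2}(\alpha(x),x)\,e^{I_\alpha(x)}$ and, since $\int_{\mathbb R}p^\alpha\,dx=1$,
\[
C_\alpha^{-1}=\int_{\mathbb R}\sigma^{-2}(\alpha(x),x)\,e^{I_\alpha(x)}\,dx .
\]
By (A1) the factor $\sigma^{-2}(\alpha(x),x)$ always lies in $[C_\sigma^{-2},C_\sigma^{2}]$, and $|b/\sigma^2|\le C_bC_\sigma^{2}$; hence it suffices to obtain, uniformly in $\alpha\in{\cal A}$, (a) two-sided pointwise control of $I_\alpha$, and (b) the finiteness and positivity of $\int_{\mathbb R}e^{I_\alpha}\,dx$ with bounds depending only on the constants in (A1)--(A2). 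Then $C_1\le C_\alpha\le C_2$ follows from the second display and the estimates for $p^\alpha$ from the first. There is no circularity: one first bounds the normalizing integral, obtaining the bounds for $C_\alpha$, and only afterwards multiplies the pointwise bounds for $e^{I_\alpha}$ by $C_\alpha$.

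For (a), the elementary estimate $|b/\sigma^2|\le C_bC_\sigma^{2}$ gives at once $|I_\alpha(x)|\le 2C_bC_\sigma^{2}|x|$ for all $x$, uniformly in $\alpha$; in particular $e^{I_\alpha(x)}\ge e^{-2C_bC_\sigma^{2}|x|}$, which, once $C_1$ is available, yields $p^\alpha(x)\ge C_1C_\sigma^{-2}e^{-2C_bC_\sigma^{2}|x|}$, i.e.\ one may take $c=C_1C_\sigma^{-2}$, $\kappa=2C_bC_\sigma^{2}$. For the decay, fix $m>0$; by (A2) choose $N$ with $2N/C_\sigma^{2}\ge m$ and then $R>1$ so large that $\sup_{u\in U}x\,b(u,x)\le -N$ whenever $|x|\ge R$. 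For $x\ge R$ this forces $b(\alpha(y),y)\le -N/y<0$, hence $b/\sigma^2\le -N/(C_\sigma^{2}y)$; integrating over $[R,x]$ and adding the (bounded) contribution of $[0,R]$ yields $I_\alpha(x)\le B_N-\tfrac{2N}{C_\sigma^{2}}\log x$ with $B_N$ depending only on $N,R,C_b,C_\sigma$. For $x\le -R$ the inequality $x\,b(\alpha(x),x)\le -N$ reverses the sign of $b$, one gets $b/\sigma^2\ge -N/(C_\sigma^{2}y)$ on $[x,-R]$, and the same computation applied to $\int_x^0$ gives the symmetric bound $I_\alpha(x)\le B_N-\tfrac{2N}{C_\sigma^{2}}\log|x|$; on $\{|x|<R\}$ one uses simply $|I_\alpha|\le 2C_bC_\sigma^{2}R$. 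Absorbing the bounded middle regime into the constant, this shows: for every $m>0$ there is $\gamma_m>0$, independent of $\alpha$, with $e^{I_\alpha(x)}\le \gamma_m(1+|x|)^{-m}$ for all $x$.

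For (b) and the conclusion: applying the last bound with $m=2$ gives $\int_{\mathbb R}e^{I_\alpha}\,dx\le \gamma_2\int_{\mathbb R}(1+|x|)^{-2}\,dx<\infty$ uniformly in $\alpha$, hence $C_\alpha^{-1}\le C_\sigma^{2}\gamma_2\int_{\mathbb R}(1+|x|)^{-2}dx$ and therefore $C_\alpha\ge C_1>0$. Conversely, bounding the normalizing integral below by its restriction to $[-1,1]$, where $\sigma^{-2}\ge C_\sigma^{-2}$ and $|I_\alpha|\le 2C_bC_\sigma^{2}$, gives $C_\alpha^{-1}\ge 2C_\sigma^{-2}e^{-2C_bC_\sigma^{2}}$, so $C_\alpha\le C_2<\infty$. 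Finally, for the $k$ in the statement, $p^\alpha(x)=C_\alpha\sigma^{-2}(\alpha(x),x)e^{I_\alpha(x)}\le C_2C_\sigma^{2}e^{I_\alpha(x)}\le C_2C_\sigma^{2}\gamma_k(1+|x|)^{-k}$, which is $\le C(1+|x|^k)^{-1}$ after enlarging the constant to pass from $(1+|x|)^{-k}$ to $(1+|x|^k)^{-1}$; the exponential lower bound was obtained above. The only point needing care is the bookkeeping: tracking the sign of $\int_0^x$ for $x<0$, and checking at each step that all constants depend on $C_b$, $C_\sigma$ and on the recurrence function $x\mapsto\sup_{u\in U}x\,b(u,x)$ (through the threshold $R=R_N$) only, never on the particular strategy $\alpha$. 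Beyond this the argument is entirely explicit and presents no genuine obstacle.
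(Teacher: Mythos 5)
Your proof is correct and follows exactly the route the paper intends: the paper's own ``proof'' is a one-line assertion that the lemma follows from boundedness, non-degeneracy, recurrence and the explicit formula (\ref{explicit}), and your argument is a careful write-up of precisely that computation, with the normalization integral bounded first to avoid circularity. No gaps.
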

{\em Proof}. Follows straightforwardly from the recurrence and boundedness assumptions and from the formula (\ref{explicit}).

\section{\bf Main results} 
We
accept in this section that a solution of the SDE with any Markov strategy exists and is a {\em weak} solution. However, it is important in the proof that it is unique in distribution, strong Markov and Markov ergodic; repeat what was already mentioned in the proof of the Lemma \ref{lem2}, that all of these follow from \cite{Krylov69} and from the assumptions (A1) and (A2) (see \cite{Ver99} about ergodicity).

For any pair $(v, \rho): \; v\in \bigcap_{p>1} W^{2}_{p, loc}, \, \rho\in \mathbb{R}$, 
define
\[
 F[v, \rho](x) := \inf_{u\in U} \left[L^{u}v(x)+f^{u}(x) -\rho\right], 
 \quad G[v](x):= \inf_{u \in U} \left[L^{u}v(x)+f^{u}(x) \right],
\]
and
\[  
F_1[v', \rho](x):= \inf_{u\in U} [\hat b^u v' + \hat f^u - \hat \rho](x), 
\]
where
\begin{eqnarray*}
a^u(x) = \frac12 (\sigma^u(x))^2, \quad \hat b^u(x) = b^u(x)/a^u(x), 
 \\ \hat f^u(x) = f^u(x)/a^u(x), \quad
\hat \rho^u(x) = \rho/a^u(x).
\end{eqnarray*}
The functions $v$ and $v'$ may be regarded as continuous and absolutely continuous due to the embedding theorems  \cite{lad}. The function $F[v, \rho](\cdot)$ is defined by the formula above as a function of the class $L_{p, loc}$ for any $p>1$; in particular, it is Lebesgue measurable and as such it is defined only  a.e. with respect to $x$. We may and will use a (any)  Borel measurable version of the function $F[v, \rho]$, the existence of which follows, for example,  from Luzin's Theorem \cite{lebesgue-borel}). It will be shown in the sequel that the function $F_1[v', \rho](x)$ is continuous in $x$ and locally Lipschitz in the two other variables. 

Let us recall what a reward improvement algorithm (RIA) is. We start with some (any) stationary strategy  $\alpha_0 \in {\cal A}$. Denote the corresponding cost, the invariant measure, and the auxiliary function  $\rho_0=\rho^{\alpha_0}=  \langle f^{\alpha_0}, \mu^{\alpha_0}\rangle$, and  $v_0=v^{\alpha_0}$. If for some $n=0,1, \ldots$ the triple $(\alpha_n, \rho_n, v_n)$ is determined,  then the strategy $\alpha_{n+1}$ is defined as follows: for  a.e.  $x$ the function $\alpha_{n+1}$ is chosen so that for each $x$
\begin{align}\label{eqvn}
L^{\alpha_{n+1}}v_n(x)+f^{\alpha_{n+1}}(x)=   G[v_n](x),
\end{align}
or, in other words,  
\[
\alpha_{n+1}(x) \in \mbox{Argmin}_{u \in U}\left[L^{u}v_n(x)+f^{u}(x) \right].
\]
We assume that a Borel measurable version of such strategy may be chosen; see the reference in the Appendix. To this strategy 
$\alpha_{n+1}$ there correspond the unique invariant measure \(\mu^{\alpha_{n+1}}\), the value \(
\rho_{n+1}:= \langle f^{\alpha_{n+1}}, \mu^{\alpha_{n+1}}\rangle
\), and the function $v_{n+1}= v^{\alpha_{n+1}}$. 

\begin{thm}\label{thm2}
Let the assumptions (A1) -- (A4) be satisfied. 
Then:

1. For any $n$, $\rho_{n+1}\le \rho_n$, and there exists a limit
\(
\rho_n \downarrow \tilde \rho.
\)

2. The sequence $(v_n)$ is tight in $C^1[-N,N]$ for each $N>0$, and 
there exists a bounded sequence of constants $\beta_n$ such that there exists a limit $\lim_{n} (v_n(x) + \beta_n) =: \tilde v(x)$.

3. The couple \((\tilde v, \tilde \rho)\) solves the equation (\ref{ehjb}). 

4. This solution \((\tilde v, \tilde \rho)\)  is unique 
-- up to an additive constant for $\tilde v$ --  in the class of functions growing no faster than some (any) polynomial and belonging to the class $W_{p, loc}^{2}$ for any $p>0$ for the first component and for $\tilde \rho \in \mathbb R$.

5. The component $\tilde \rho$ in the couple $(\tilde v, \tilde \rho)$  coincides with $\rho$.

6. Under the additional assumption (A5), \(\tilde v'' \in Lip_{loc}\). 

\end{thm}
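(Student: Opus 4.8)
The plan is to prove the six assertions more or less in the order stated, since each relies on the previous ones, and to lean heavily on Lemmas \ref{lem2}, \ref{lem1}, \ref{lem3}. \emph{Step 1 (monotonicity of the rewards).} Fix $n$ and recall $v_n = v^{\alpha_n}$ satisfies the Poisson equation $L^{\alpha_n} v_n + f^{\alpha_n} - \rho_n = 0$ a.e. By the definition of $\alpha_{n+1}$ as an (a.e.) minimizer, $L^{\alpha_{n+1}} v_n(x) + f^{\alpha_{n+1}}(x) = G[v_n](x) \le L^{\alpha_n} v_n(x) + f^{\alpha_n}(x) = \rho_n$ for a.e. $x$. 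Hence $L^{\alpha_{n+1}} v_n + f^{\alpha_{n+1}} - \rho_n \le 0$ a.e. Now integrate against the invariant measure $\mu^{\alpha_{n+1}}$: since $\langle L^{\alpha_{n+1}} v_n, \mu^{\alpha_{n+1}}\rangle = 0$ (using that $v_n \in W^2_{p,loc}$ with polynomial growth, $\mu^{\alpha_{n+1}}$ has all moments by Lemma \ref{lem2}, and $p^{\alpha_{n+1}}$ solves $(L^{\alpha_{n+1}})^* p = 0$ — the integration by parts is justified by the decay of $p^{\alpha_{n+1}}$ in Lemma \ref{lem3} against the polynomial growth of $v_n', v_n''$), we get $\rho_{n+1} = \langle f^{\alpha_{n+1}}, \mu^{\alpha_{n+1}}\rangle \le \rho_n$. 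Boundedness from below is \eqref{rhoa}, so $\rho_n \downarrow \tilde\rho$ exists.

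\emph{Step 2 (tightness and a convergent subsequence).} By \eqref{vineq} in Lemma \ref{lem1}, $\sup_\alpha(|v^\alpha(x)| + |(v^\alpha)'(x)|) \le C(1+|x|^m)$, so in particular on each $[-N,N]$ the family $\{v_n\}$ is uniformly bounded in $C^1$; moreover $v_n \in C^{1,Lip}$ with the Lipschitz constant of $v_n'$ controlled locally through the changed-time equation \eqref{newvhat} (the coefficient $\bar b^{\alpha_n}$ and $\bar f^{\alpha_n}$ are uniformly bounded on $[-N,N]$ by (A1), (A3)), so $\{v_n'\}$ is equicontinuous on $[-N,N]$. By Arzel\`a--Ascoli plus a diagonal argument over $N = 1,2,\dots$ there is a subsequence along which $v_n + \beta_n \to \tilde v$ in $C^1_{loc}$ for a suitable bounded centering sequence $\beta_n$ (e.g. $\beta_n = -v_n(0)$, bounded because $v_n(0)$ is bounded by \eqref{vineq}). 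That the \emph{whole} sequence converges will follow a posteriori from the uniqueness in Step 4, applied to every subsequential limit; I would flag this circularity and resolve it exactly as is standard — first extract a limit, prove it solves \eqref{ehjb}, prove uniqueness, then conclude every subsequence has the same limit.

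\emph{Step 3 (the limit solves the ergodic HJB equation).} This is the technical heart, and it is where the paper promises "a more detailed calculus in step 3." The idea: pass to the limit in $L^{\alpha_{n+1}} v_n + f^{\alpha_{n+1}} - \rho_n = G[v_n] - (\rho_n$-shift$) \le 0$ together with the lower bound coming from $\rho_{n+1}$. One shows $G[v_n] \to G[\tilde v]$ in $L_{p,loc}$: since $v_n' \to \tilde v'$ uniformly on compacts and the coefficients $b^u, \sigma^u, f^u$ are continuous in $u$, uniformly bounded, and $U$ is compact, the infimum over $u$ is stable under $C^1_{loc}$ perturbations of $v_n$ except for the second-order term $\tfrac12(\sigma^u)^2 v_n''$; this last term is controlled because $v_n''$ is bounded in $L_{p,loc}$ (from \eqref{newvhat} and (A1)), so a subsequence converges weakly in $L_{p,loc}$, and one identifies the weak limit as $\tilde v''$ via the $C^1$ convergence. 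Combining $G[v_n] \le \rho_n$ a.e. with the reverse bound (from $\langle G[v_n] - \rho_n, \mu^{\alpha_{n+1}}\rangle$-type estimates showing $G[v_n] - \rho_n$ cannot be bounded away from $0$ on a set of positive $\mu^{\alpha_{n+1}}$-measure, and $\rho_{n+1} - \rho_n \to 0$) yields $F[\tilde v, \tilde\rho](x) = \inf_u[L^u\tilde v + f^u - \tilde\rho] = 0$ a.e.; the 1D regularity bootstrap then upgrades "a.e." to "everywhere" as in the discussion following \eqref{ehjb}. \textbf{The main obstacle is exactly this step}: handling the second-order term under only weak $L_{p,loc}$ compactness of $v_n''$ while the nonlinearity $\inf_u$ sits in front of it, and making the "reverse inequality" rigorous without assuming the minimizing strategies converge.

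\emph{Step 4 (uniqueness up to a constant).} Given two solutions $(\tilde v, \tilde\rho)$ and $(\hat v, \hat\rho)$, first show $\tilde\rho = \hat\rho$: pick a Borel measurable a.e.-minimizer $\hat\alpha$ for $(\hat v, \hat\rho)$ so that $L^{\hat\alpha}\hat v + f^{\hat\alpha} - \hat\rho = 0$ a.e., integrate against $\mu^{\hat\alpha}$ to get $\hat\rho = \langle f^{\hat\alpha}, \mu^{\hat\alpha}\rangle = \rho^{\hat\alpha} \ge \rho$; conversely $F[\tilde v, \tilde\rho] = 0$ means $L^u\tilde v + f^u - \tilde\rho \ge 0$ for every $u$, hence for every strategy $\alpha$, so integrating against $\mu^\alpha$ gives $\tilde\rho \le \rho^\alpha$ for all $\alpha$, whence $\tilde\rho \le \rho$; same for $\hat\rho$, so $\tilde\rho = \hat\rho = \rho$ — which simultaneously delivers Step 5. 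Then with $\tilde\rho = \hat\rho$ fixed, at a.e. $x$ let $u^*(x)$ attain the infimum for $\tilde v$; then $L^{u^*}\tilde v + f^{u^*} - \tilde\rho = 0$ while $L^{u^*}\hat v + f^{u^*} - \tilde\rho \ge 0$, so $w := \tilde v - \hat v$ satisfies $L^{u^*(x)} w \le 0$ a.e., i.e. $w$ is an $L^{\alpha^*}$-supersolution for the Borel strategy $\alpha^* = u^*(\cdot)$; since $w$ has polynomial growth and $X^{\alpha^*}$ is ergodic with $\sup_t \mathbb E_x|X^{\alpha^*}_t|^k < \infty$, the uniqueness-up-to-constant part of Lemma \ref{lem1}(5) (or a direct $\mathbb E_x w(X^{\alpha^*}_{t\wedge\tau_N})$ argument plus Lemma \ref{lem2}) forces $w \equiv \mathrm{const}$.

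\emph{Step 5} is settled inside Step 4 as noted. \emph{Step 6 (extra regularity under (A5)).} Under (A5) the coefficients $b^u,\sigma^u,f^u$ are $C^1$ in $x$ with uniformly bounded derivatives, so in the changed-time equation $\bar L^{\alpha^*}\tilde v + \bar f^{\alpha^*} = 0$ — equivalently $\tfrac12\tilde v'' = -\bar b^{\alpha^*}\tilde v' - \bar f^{\alpha^*}$ — one wants the right side to be Lipschitz; $\tilde v'$ is Lipschitz (from Step 2/3), $\bar b^{\alpha^*}$ and $\bar f^{\alpha^*}$ are Lipschitz in $x$ \emph{provided} $\alpha^*$ can be chosen with enough regularity, but since $\alpha^*$ is only Borel one instead argues directly: $\tilde v'' = 2\inf_u[\hat b^u \tilde v' + \hat f^u] - 2\hat\rho^u$-type expression (using $F_1[\tilde v', \tilde\rho] = 0$, whose continuity in $x$ and Lipschitz dependence on $(v', \rho)$ was announced), and the infimum over the compact $U$ of a family of functions that are jointly Lipschitz in $(x, \tilde v'(x))$ — Lipschitz in $x$ by (A5), Lipschitz in the $\tilde v'$-slot trivially — is Lipschitz in $x$ because $x \mapsto \tilde v'(x)$ is Lipschitz; hence $\tilde v'' \in Lip_{loc}$. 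I expect Steps 5 and 6 to be short once Steps 3 and 4 are in place.
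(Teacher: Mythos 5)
Your plan is sound in outline and several of your deviations from the paper's route are legitimate: proving $\rho_{n+1}\le\rho_n$ and the identity $\tilde\rho=\hat\rho=\rho$ by integrating the pointwise inequalities against the invariant measures (using $\langle L^\alpha g,\mu^\alpha\rangle=0$, justified in 1D by the explicit density of Lemma \ref{lem3}) is a clean alternative to the paper's Dynkin-formula-divided-by-$t$ arguments, and it does deliver assertions 1, 4 (for the $\rho$-component) and 5 in one stroke. Steps 2 and 6 also match the paper in substance. However, your Step 3 is not a proof but a description of the difficulty, and the obstacle you flag there is exactly the one that remains unresolved: you cannot pass to the limit in $\inf_u[L^u v_{n_j}+f^u-\rho_{n_j}]$ knowing only that $v''_{n_j}$ converges weakly in $L_{p,loc}$, because the infimum over $u$ does not commute with weak limits, and your proposed ``reverse inequality'' via $\langle G[v_n]-\rho_n,\mu^{n+1}\rangle\to 0$ only controls $G[v_n]-\rho_n$ in an $L_1$ sense against a measure that changes with $n$.

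The paper closes this gap with two ideas you are missing. First, it proves (its step 3) that $w_n:=v_n-v_{n+1}$ satisfies $w_n-\beta_n\to 0$ locally uniformly for suitable constants $\beta_n=\langle w_n,\mu^{n+1}\rangle$: since $v_n$ solves the Poisson equation for $L^{\alpha_{n+1}}$ with right-hand side perturbed by the nonnegative defect $\psi_{n+1}=-(G[v_n]-\rho_n)$, one gets $\langle\psi_{n+1},\mu^{n+1}\rangle=\rho_n-\rho_{n+1}\to 0$, hence (via the lower bound on the densities in Lemma \ref{lem3}) $\int_{-N}^N\psi_{n+1}^2\,dx\to 0$, and then a coupling representation of $w_n-\beta_n$ as $\int_0^\infty\mathbb E(\psi_{n+1}(\hat X_t)-\psi_{n+1}(\tilde X_t))\,dt$ combined with Krylov's estimate and the polynomial moment bounds on the coupling time shows $|w_n(x)-\beta_n|\le C(1+|x|^{3m})\epsilon$ for $n$ large. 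This is what guarantees that $v_{n_j}$ and $v_{n_j+1}$ converge to the \emph{same} $C^1_{loc}$ limit, which your sketch tacitly needs. Second, instead of confronting $\inf_u$ applied to weakly convergent second derivatives, the paper divides by $a^u$ so that the HJB operator becomes $v''+F_1[v',\rho]$ with the control appearing only in first-order terms, and then \emph{integrates once} over $[r,x]$: the troublesome difference $\int_r^x(v''_{n_j}-v''_{n_j+1})\,ds$ telescopes into $v'_{n_j}(x)-v'_{n_j+1}(x)-v'_{n_j}(r)+v'_{n_j+1}(r)$, which vanishes by the $C^1_{loc}$ convergence just established, while $F_1[v'_{n_j},\rho_{n_j}]\to F_1[\tilde v',\tilde\rho]$ pointwise because it involves only $v'$. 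This yields the integrated identity (\ref{133}), which is then differentiated to obtain (\ref{ehjb}) everywhere. Without these two devices your Step 3 does not go through, and since assertions 2, 3 and 6 all rest on it, the proposal as written does not constitute a proof of the theorem.
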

In the short presentation \cite{amv17}, beside the restrictive assumption $f\in [0,1]$ and maximisation instead of minimisation, only a sketch of the proof was offered with many details explained too  briefly; uniqueness of $\tilde v$ was not addressed. Here the full proof is given. NB: 
We never  compare the trajectories of two SDE solutions in one formula and the processes corresponding to different strategies may be defined on different probability spaces. 

~

\begin{proof}
{\bf 1}. Due to  (\ref{eqvn}) and (\ref{eqva}), for almost every (a.e.) $x\in \mathbb R$, 
\begin{eqnarray*}
\rho_{n} 
= L^{\alpha_{n}}v_n(x)+f^{\alpha_{n}}(x)
\ge G[v_n](x) 
= L^{\alpha_{n+1}}v_n(x)+f^{\alpha_{n+1}}(x)
\end{eqnarray*}
and also for a.e. $x\in \mathbb R$, 
\[
\rho_{n+1} =  L^{\alpha_{n+1}}v_{n+1}(x)+f^{\alpha_{n+1}}(x)
\]
So,
\begin{align}\label{erhorho}
\rho_n - \rho_{n+1} \stackrel{a.e.}{\ge} (L^{\alpha_{n+1}}v_{n} + f^{\alpha_{n+1}})(x) - (L^{\alpha_{n+1}}v_{n+1} + f^{\alpha_{n+1}})(x)
 \nonumber\\\\ \nonumber
= (L^{\alpha_{n+1}}v_{n}  - L^{\alpha_{n+1}}v_{n+1})(x).
\end{align}
Let us apply Ito -- Krylov's  formula (see \cite{Krylov77}) with expectations (also known as Dynkin's formula) to \((v_{n}  - v_{n+1})(X^{\alpha_{n+1}}_t)\):   
we have for any $x\in \mathbb R$, 
\begin{eqnarray}\label{eexpexp}
\mathbb E_x \left(v_{n}(X^{\alpha_{n+1}}_t) - v_{n+1}(X^{\alpha_{n+1}}_t)\right) -
\left(v_{n} - v_{n+1}\right)(x)
 \nonumber \\
 \\\nonumber 
= \mathbb E_x \int_0^t (L^{\alpha_{n+1}}v_{n}  - L^{\alpha_{n+1}}v_{n+1})(X^{\alpha_{n+1}}_s)\,ds
\le  \mathbb E_x \int_0^t (\rho_n - \rho_{n+1})\,ds = (\rho_n - \rho_{n+1})\,t.
\end{eqnarray}
The equality in the equation (\ref{eexpexp}) holds for all  $x\in\mathbb R$ and not just a.e. since the functions $v_n$ are Sobolev solutions of Poisson equations locally integrable in any degree with their derivatives up to the second order. Such functions can be regarded as  continuous due to the embedding theorems \cite{lad}. In addition, the functions 
$\mathbb E_x v_{n}(X^{\alpha_{n+1}}_t)$,  $\mathbb E_x v_{n+1}(X^{\alpha_{n+1}}_t)$, and $\displaystyle  \mathbb E_x \int_0^t (L^{\alpha_{n+1}}v_{n}  - L^{\alpha_{n+1}}v_{n+1})(X^{\alpha_{n+1}}_s)\,ds$ as functions of $x$ for each $t>0$ are all H\"older continuous, being solutions of non-degenerate parabolic equations \cite{KS}. 
We also used the fact that the distribution of $X^{\alpha_{n+1}}_s$ for almost all $s>0$ is absolutely continuous with respect to the Lebesgue measure due to the non-degeneracy and by virtue of Krylov's estimates \cite{Krylov77}; due to this reason and because $v_n, v_{n+1} \in C$, the a.e. inequality (\ref{erhorho}) implies (\ref{eexpexp}) for every $x$. 
Further, since the left hand side in (\ref{eexpexp}) is bounded for a fixed $x$ by virtue of the Lemma \ref{lem1}, we divide all terms of the latter inequality by \(t\) and let \(t \to\infty\)  to get,
\[
0 \le \rho_n - \rho_{n+1},
\]
as required. Thus, $\rho_n \ge \rho_{n+1}$, so that $\rho_n \downarrow \tilde \rho$ (since the sequence $\rho_n$ is bounded for $f \in {\cal K}$, see (\ref{rhoa}) in the Lemma \ref{lem2}) with some $\tilde \rho$.  So,  the RIA does converge. 

\medskip

Note that clearly $\tilde \rho \ge \rho$, since $\rho$ is the infimum over all Markov strategies, while $\tilde \rho$ is the infimum   over some countable subset  of them.  Later on we shall show that they do coincide.
 
\medskip
 
Now we want to show that 
there exists a bounded sequence of real values (non-random!) $\{\beta_n\}$ such that $v_n + \beta_n\to \tilde v$, so that the couple $(\tilde v, \tilde \rho)$ satisfies the equation (\ref{ehjb}),
and that $\tilde \rho$ here is unique, as well as $\tilde v$ in some sense. In the first instance we will do it for some subsequence $n_j$; eventually the convergence of the whole sequence $v_n$ will follow from the uniqueness of the solution of Bellman's equation, although, it is not important for the proof of the Theorem.

~

\noindent
{\bf 2}. Let us show local tightness of the family of functions $(v_n)$ in $C^1$. 
Note that the equation (\ref{ehjb}) is equivalent to the following:
\begin{equation}\label{ehjb2}
V''_{}(x)+ \inf_{u\in U}\left[\frac{b(u,x)}{a(u,x)} V'_{}(x)+\frac{f(u,x)}{a(u,x)} -\frac{\rho_{}}{a(u,x)}\right] = 0, 
\end{equation}
while the equation
\begin{equation}\label{zero}
L^{\alpha_{n+1}}v_{n+1}(x)+f^{\alpha_{n+1}}(x) -\rho_{n+1} 
\stackrel{a.e.}{=} 0 
\end{equation}
is equivalent to 
\[
v''_{n+1}(x)+ \frac{b(\alpha_{n+1}(x),x)}{a(\alpha_{n+1}(x),x)} v'_{n+1}(x)+\frac{f(\alpha_{n+1}(x),(x))}{a(\alpha_{n+1}(x),x)} -\frac{\rho_{n+1}}{a(\alpha_{n+1}(x),x)} = 0.
\]
According to the Lemma \ref{lem1}, the functions $v_{n+1}'$ are uniformly locally bounded. Since the sequence $\rho_{n+1}$ is bounded and due to the uniform local boundedness of the functions $f(\alpha_{n+1}(x),x)$ and uniform nondegeneracy of $a$, it follows that $(v''_n)$ locally are uniformly bounded and satisfy the uniform in $n$ growth bounds similar to (\ref{vineq}) for the function itself and for its first derivative due to the equation (for example, due to (\ref{ehjb2})). This guarantees compactness of $(v_n)$ in $C^1$ locally. 

~

\noindent
{\bf 3}. 
Due to the (local) compactness property showed in the previous step, by the diagonal procedure from any infinite sub-family of functions $v_n$ it is possible to choose a converging in $C^1_{loc}$ subsequence. We want to show that up to a constant the limit is unique. For this aim, first of all we shall see shortly that if some $v_{n_j}(x)$ has a limit as $n_j\to\infty$, say, $\tilde v(x)$ (locally in $C$) then $v_{n_j+1}(x)+\beta_{n_j}$ has the same limit, where  $\beta_{n}$ is some bounded sequence of real values. (In fact, what will be established is a little bit more complicated but still enough for our purposes.) We have, 
\[
L^{\alpha_{n+1}}v_{n+1}(x)+f^{\alpha_{n+1}}(x) -\rho_{n+1} \stackrel{a.e.}{=} 0, 
\] 
and 
\begin{equation}\label{psi}
L^{\alpha_{n+1}}v_{n}(x)+f^{\alpha_{n+1}}(x) -\rho_{n} =: -\psi_{n+1}(x) \stackrel{a.e.}{\le} 0.
\end{equation}
Let us rewrite it as follows, 
\[
L^{\alpha_{n+1}}v_{n}(x)+f^{\alpha_{n+1}}(x) -\rho_{n} + \psi_{n+1}(x) \stackrel{a.e.}{=} 0.
\] 
In other words, the function $v_{n}$ solves the Poisson equation with the second order operator $L^{\alpha_{n+1}}$ and the ``right hand side''  
$-(f^{\alpha_{n+1}}(x) + \psi_{n+1}(x) -\rho_{n})$. This is only possible if the expression $f^{\alpha_{n+1}}(x) + \psi_{n+1}(x) -\rho_{n}$ is centered with respect to the invariant measure  $\mu^{n+1}$ because Poisson equations in the whole space have no solutions for non-centered right hand sides (see, for example, \cite{PV01}). This implies that 
\[
\langle f^{\alpha_{n+1}}(x) + \psi_{n+1} -\rho_{n}, \mu^{n+1}\rangle = 0
\]
So, 
\begin{equation}\label{psi1}
\langle \psi_{n+1}, \mu^{n+1}\rangle = \rho_n - \rho_{n+1}.
\end{equation}
Now denote 
\[
w_n(x) := v_n(x) - v_{n+1}(x). 
\]
We have, 
\[
L^{\alpha_{n+1}}w_{n}(x) + \psi_{n+1}(x) -(\rho_{n}-\rho_{n+1})  \stackrel{a.e.}{=} 0.
\] 
So, there is a constant $\beta_n = \langle w_n, \mu^{n+1}\rangle$ such that 
\begin{equation}\label{wn}
w_{n}(x) - \beta_n = \int_0^\infty \mathbb E_x (\psi_{n+1}(X^{n+1}_t) -(\rho_{n}-\rho_{n+1}))\, dt.
\end{equation}
Let us show that for any $N>0$, 
\begin{equation}\label{psi2}
\int_{-N}^N \psi^2_n(x)\,dx \to 0, \quad n\to \infty. 
\end{equation}
First of all, note that  all functions $\psi_n$ and, hence, $\psi^2_n$ are uniformly locally bounded and may only grow polynomially fast, 
\begin{equation}\label{psibd}
(0\le \;) \; \psi_n(x) \le C(1+|x|^m), 
\end{equation}
with some $C,m$ the same for all values of $n$. 
which follows from the definition (\ref{psi}), and the properties of derivatives $v'_n$ and $v''_n$, and from the Lemma \ref{lem3}, and due to
\[
\langle \psi_{n+1}, \mu^{n+1}\rangle = \rho_n - \rho_{n+1} \to 0, \quad n \to \infty. 
\]
Now let us rewrite the equation (\ref{wn}) 
via a stationary version of our diffusion, say, $\tilde X^{n+1}_t$:
\[
w_{n}(x) - \beta_n = \int_0^\infty \mathbb E_x (\psi_{n+1}(X^{n+1}_t) - E_{\mu^{n+1}} (\psi_{n+1}(\tilde X^{n+1}_t))\, dt.
\] 
(Note that if we knew that $w_n$ were centered with respect to the invariant measure $\mu^{n+1}$ then we would have $\beta_n=0$; however, the functions $v_n$ and $v_{n+1}$ are both centered with respect to two different measures, and this is the reason why their difference is not just small, but small up to some additive constant; this very constant is denoted by $\beta_n$.)
Using the coupling idea (see, for example, \cite{Ver99}), let us consider the independent processes $X^{n+1}_t$ and $\tilde X^{n+1}_t$ on the same probability space (just considering the product space) and denote the moment of the first meeting
\[
\tau: = \inf(t\ge 0: \, X^{n+1}_t = \tilde X^{n+1}_t). 
\]
It is known (see \cite[Theorem 5]{Ver99}) that under our recurrence assumptions for any $k>0$ there are some constants $C_k, m$ such that uniformly with respect to $n$, 
\[
\mathbb E_{x, \mu^{n+1}} \tau^k \le C_k(1+|x|^m).
\]
Denote 
\[
\hat X^{n+1}_t := 1(t<\tau) X^{n+1}_t + 
1(t\ge \tau) \tilde X^{n+1}_t. 
\]
Since $\tau$ is a stopping time and because the couple $(X^{n+1}_t, \tilde X^{n+1}_t)$ is strong Markov (see \cite{Krylov73}), the process
$(\hat X^{n+1}_t)$ is also strong Markov equivalent to $(X^{n+1}_t)$. Therefore, it is possible to rewrite, 
\[
w_{n}(x) - \beta_n = \int_0^\infty \mathbb E_{x, \mu} (\psi_{n+1}(\hat X^{n+1}_t) - \psi_{n+1}(\tilde X^{n+1}_t))\, dt.
\] 
Hence, using the fact that after $\tau$ the processes $\hat X^{n+1}_t$  and $\tilde X^{n+1}_t$ coincide, we obtain
\begin{align*}
w_{n}(x) - \beta_n = \int_0^\infty \mathbb E_{x, \mu} 1(t<\tau)(\psi_{n+1}(\hat X^{n+1}_t) - \psi_{n+1}(\tilde X^{n+1}_t))\, dt
 \\\\
=  \int_0^\infty \mathbb E_{x, \mu} \sum_{i=0}^\infty 1(i\le \tau<i+1) 1(t<\tau)(\psi_{n+1}(\hat X^{n+1}_t) - \psi_{n+1}(\tilde X^{n+1}_t))\, dt
 \\\\
=  \sum_{i=0}^\infty \mathbb E_{x, \mu}  \int_0^\infty 1(i\le \tau<i+1) 1(t<\tau)(\psi_{n+1}(\hat X^{n+1}_t) - \psi_{n+1}(\tilde X^{n+1}_t))\, dt. 
\end{align*}
Thus, using Cauchy-Buniakovsky-Schwarz inequality and Fubini Theorem, we have, 
\begin{align*}
|w_{n}(x) - \beta_n|
\le \sum_{i=0}^\infty \mathbb E_{x, \mu}  \int_0^{i+1} 1(\tau>i)|\psi_{n+1}(\hat X^{n+1}_t) - \psi_{n+1}(\tilde X^{n+1}_t)|\, dt
 \\\\
\le \sum_{i=0}^\infty  \int_0^{i+1} \mathbb E_{x, \mu}  1(\tau>i)(|\psi_{n+1}(\hat X^{n+1}_t)| +|\psi_{n+1}(\tilde X^{n+1}_t)|)\, dt 
 \\\\
\le \sum_{i=0}^\infty  \int_0^{i+1} (\mathbb E_{x, \mu}  1(\tau>i))^{1/2}(\mathbb E_{x, \mu} (|\psi_{n+1}(\hat X^{n+1}_t)| +|\psi_{n+1}(\tilde X^{n+1}_t)|)^2)^{1/2}\, dt 
 \\\\
\le 2 \sum_{i=0}^\infty  (\mathbb E_{x, \mu}  1(\tau>i))^{1/2}\int_0^{i+1} (\mathbb E_{x, \mu} |\psi_{n+1}(\hat X^{n+1}_t)|^2 +\mathbb E_{x, \mu}|\psi_{n+1}(\tilde X^{n+1}_t)|^2)^{1/2}\, dt 
 \\\\
\le 2 \sum_{i=0}^\infty  (\mathbb E_{x, \mu}  1(\tau>i))^{1/2}\int_0^{i+1} [(\mathbb E_{x, \mu} (\psi_{n+1}(\hat X^{n+1}_t))^2)^{1/2} +(\mathbb E_{x, \mu}(\psi_{n+1}(\tilde X^{n+1}_t))^2)^{1/2}]\, dt. 
\end{align*}
Now, let us take any $\epsilon>0$ and use the inequality
\(
\sqrt{a} \le \frac\epsilon2 + \frac{a}{2\epsilon}
\).
We estimate, 
\begin{align*}
\int_0^{i+1} [(\mathbb E_{x, \mu} (\psi_{n+1}(\hat X^{n+1}_t))^2)^{1/2} +(\mathbb E_{x, \mu}(\psi_{n+1}(\tilde X^{n+1}_t))^2)^{1/2}]\, dt
 \\\\
\le  \epsilon (i+1) 
+ \frac1{2\epsilon} \int_0^{i+1} [\mathbb E_{x, \mu} \psi^2_{n+1}(\hat X^{n+1}_t) +\mathbb E_{x, \mu}\psi^2_{n+1}(\tilde X^{n+1}_t)]\, dt.
\end{align*}
Let us first consider the stationary term. We have, 
\begin{align*}
\frac1{2\epsilon} \int_0^{i+1} \mathbb E_{x, \mu}\psi^2_{n+1}(\tilde X^{n+1}_t)\, dt 
+ \frac1{2\epsilon} \int_0^{i+1} \mathbb E_{x, \mu}\psi^2_{n+1}(\hat X^{n+1}_t)\, dt 
 \\\\
=  \frac1{2\epsilon} \int_0^{i+1} \mathbb E_{x, \mu}\psi^2_{n+1}1_{[-N,N]}(\tilde X^{n+1}_t)\, dt 
+ \frac1{2\epsilon} \int_0^{i+1} \mathbb E_{x, \mu}\psi^2_{n+1}1_{\mathbb R \setminus [-N,N]}(\tilde X^{n+1}_t)\, dt
 \\\\
+ \frac1{2\epsilon} \int_0^{i+1} \mathbb E_{x, \mu} \psi^2_{n+1} 1_{[-N,N]}(\hat  X^{n+1}_t)\, dt 
+ \frac1{2\epsilon} \int_0^{i+1} \mathbb E_{x, \mu}\psi^2_{n+1}1_{\mathbb R \setminus [-N,N]}(\hat X^{n+1}_t)\, dt. 
\end{align*}
Given (\ref{psibd}) 
and because any stationary measure integrates uniformly any power function, let us find such $N$ that uniformly with respect to $n$, 
\begin{equation}\label{eN}
\langle C(1+|x|^{2m})1_{\mathbb R \setminus [-N,N]}, \mu^{n+1}
\rangle
< \epsilon^2/2,
\end{equation}
which is possible due to the Lemmata \ref{lem2} and \ref{lem3}, and also such that  
\(
N > \epsilon^{-2}
\).
Then choose $n(\epsilon)$ such that 
\[
\sup_{n\ge n(\epsilon)} \int_{|x|\le N} \psi^2_n(x)\,dx < \epsilon^2/2. 
\]
Due to Krylov's estimate 
$$
\displaystyle \mathbb E \int_0^T g(\tilde X^{n+1}_t)\, dt \le K_T\|g\|_{L_1(\mathbb R)}
$$
for any function $g\ge 0$, and also 
$$
\displaystyle \mathbb E \int_s^{s+T} g(\tilde X^{n+1}_t)\, dt \le K_T\|g\|_{L_1(\mathbb R)}
$$ 
for any $s>0$ (follows from \cite[Theorem 2.2.3]{Krylov77}), 
we evaluate with $n\ge n(\epsilon)$:
\begin{align*}
\frac1{2\epsilon} \int_0^{i+1} \mathbb E_{x, \mu}\psi^2_{n+1}1_{[-N,N]}(\tilde X^{n+1}_t)\, dt
 \\\\
=  \frac{1}{2\epsilon} \sum_{k=0}^{i} \mathbb E_{x} \int_k^{k+1} \psi^2_{n+1}1_{[-N,N]}(\tilde X^{n+1}_t)\, dt 
\le \frac{i+1}{2\epsilon} 
K \|\psi^2_{n+1}1_{[-N,N]}\|_{L^1_{}}
\le \frac{(i+1)K\epsilon}2.
\end{align*}
Indeed, for any $k\ge 0$ we have, 
\begin{align*}
\mathbb E_{x} \int_k^{k+1} \psi^2_{n+1}1_{[-N,N]}(\tilde X^{n+1}_t)\, dt 
= \mathbb E_{x} \mathbb E_{x}\left(\int_k^{k+1} \psi^2_{n+1}1_{[-N,N]}(\tilde X^{n+1}_t)\, dt|{\cal F}_{k}\right) 
 \\\\
= \mathbb E_{x} \mathbb E_{\tilde X^{n+1}_k} \int_0^{1} \psi^2_{n+1}1_{[-N,N]}(\tilde X^{n+1}_t)\, dt 
 \\\\
\le \frac{1}{2\epsilon} 
K \|\psi^2_{n+1}1_{[-N,N]}\|_{L^2_{}}
\le \frac{(i+1)K\epsilon}2.
\end{align*}

This argument works for the non-stationary process as well: 
due to the same Krylov's estimate, 
\begin{align*}
\frac1{2\epsilon} \int_0^{i+1} \mathbb E_{x, \mu}\psi^2_{n+1}1_{[-N,N]}(\hat X^{n+1}_t)\, dt
 \\\\
=  \frac{1}{2\epsilon} \sum_{k=0}^{i} \mathbb E \int_k^{k+1} \psi^2_{n+1}1_{[-N,N]}(\hat X^{n+1}_t)\, dt 
\le \frac{i+1}{2\epsilon} 
K \|\psi^2_{n+1}1_{[-N,N]}\|)_{L^1_{}}
\le \frac{(i+1)K\epsilon}2.
\end{align*}
Further, 
\begin{align*}
\frac1{2\epsilon} \int_0^{i+1} \mathbb E_{x, \mu}\psi^2_{n+1}1_{\mathbb R \setminus [-N,N]}(\tilde X^{n+1}_t)\, dt 
\le \frac{i+1}{2\epsilon}  \times \frac{\epsilon^2}2 = \frac{(i+1) \epsilon}4.
\end{align*}
Finally, using (\ref{bcheb}), we obtain with some $m$,  
\begin{align*}
\frac1{2\epsilon} \int_0^{i+1} \mathbb E_{x, \mu}\psi^2_{n+1}1_{\mathbb R \setminus [-N,N]}(\hat X^{n+1}_t)\, dt 
 = \frac1{2\epsilon} \int_0^{i+1} \mathbb E_{x, \mu}\psi^2_{n+1}1_{\mathbb R \setminus [-N,N]}(X^{n+1}_t)\, dt 
 \\\\
\le  C\frac{i+1}{2\epsilon} \frac{(1+|x|^m)}{N}
\le C (i+1)(1+|x|^m)\epsilon.
\end{align*}
Overall, 
this shows that with the appropriately chosen (uniformly bounded) $\beta_n$, 
\begin{align}\label{wbn}
|w_{n}(x) - \beta_n| \le C (1+|x|^{2m}) \epsilon  \sum_{i=0}^\infty  (i+1)(\mathbb E_{x, \mu}  1(\tau>i))^{1/2},  
\quad n\ge n(\epsilon).
\end{align}
By virtue of the results in \cite{Ver99}, for any $k>0$ there are  $C,m >0$ such that 
\[
\mathbb P_{x, \mu}  1(\tau>i) \le C \frac{1+|x|^m}{1+i^k}. 
\]
Therefore, taking any $k>1$, we have that the series in (\ref{wbn}) converges providing us an estimate 
\begin{align}\label{wbne}
|w_{n}(x) - \beta_n| \le C (1+|x|^{3m}) \epsilon,  
\quad n\ge n(\epsilon).
\end{align}
In other words, the difference $w_{n}(x) - \beta_n = v_n - v_{n+1} - \beta_n$ is locally uniformly converging to zero as $n\to\infty$.
Naturally, it also implies that for any  subsequence $n_j$ such that $v_{n_j}$ converges locally uniformly in $C^1$ we have that $v'_{n_j}$ and $v'_{n_j+1}$ may only converge to the same limit, i.e., 
derivatives $v'_{n_j} - v'_{n_j+1} \to 0$ (locally uniformly) as $j\to\infty$. Indeed, otherwise we just integrate to show that the limits of $v_{n_j}$ and $v_{n_j+1} + \beta_{n_j}$ are different, which contradicts to what was established earlier. 

~

\noindent
{\bf 4}. What we want to do now is to pass to the limit as $j \to \infty$ in the equations 
\[
L^{\alpha_{n_j+1}}v_{n_j+1}(x)+f^{\alpha_{n_j+1}}(x) -\rho_{n_j+1} \stackrel{a.e.}{=} 0, \quad \& \quad 
G[v_{n_j}](x) - \rho_{n_j} \le 0, 
\] 
where $(n_j, j \to \infty)$ is any sequence such that $v_{n_j}$ converges (locally uniformly) in $C^1$.  
From 
\begin{eqnarray*}
G[v_{n_j}](x) - \rho_{n_j} =
L^{\alpha_{n_j+1}}v_{{n_j}}(x)+f^{\alpha_{n_j+1}}(x) -\rho_{n_j} 
 \\\\
(=
\inf_{u\in U} [L^{u}v_{n_j}(x)+f^{u}(x) -\rho_{n_j}] \stackrel{a.e.}{\le} 0),
\end{eqnarray*}
by subtracting zero a.e. (\ref{zero}), we obtain a.e., 
\begin{equation}\label{eG}
G[v_{n_j}](x) - \rho_{n_j} =
L^{\alpha_{n_j+1}}(v_{n_j}(x) - v_{n_j+1}(x))  -(\rho_{n_j}-\rho_{n_j+1}).
\end{equation}
Now we want to show that 
\begin{equation}\label{133}
\tilde v'(x) - \tilde v'(r) + \int_r^x F_1[s, \tilde v'(s), \tilde\rho]\,ds = 0, 
\end{equation}
which in turn implies by differentiation the equation equivalent to (\ref{ehjb}), 
\begin{equation}\label{1333}
\tilde v''(x) + F_1[x,\tilde v', \tilde\rho](x) \stackrel{}{=} 0, 
\end{equation}
for any $x$, with the note that $\tilde v'$ is absolutely continuous. 

Let us show that (\ref{eG}),
indeed, implies (\ref{133}). 
Note that $G[v_{n_j}](x) - \rho_{n_j} \le 0$ (a.e.). 
Let us divide (\ref{eG}) by 
$a_{n_j+1} = a^{\alpha_{n_j+1}}$ and use $\delta:=\inf_{u,x} a^u(x) > 0$: we get a.e. with some $K>0$, 
\begin{align}\label{also}
\displaystyle 0 \ge \frac{(G[v_{n_j}](x) - \rho_n)}{a_{n_j+1}}
= (v''_{n_j}(x) - v''_{n_j+1}(x)) + (\hat b^{\alpha_{n_j+1}} (v'_{n_j} - v'_{n_j+1})) - \frac{(\rho_{n_j}-\rho_{n_j+1})}{a_{n_j+1}}
 \nonumber \\  \nonumber \\
\ge (v''_{n_j}(x) - v''_{n_j+1}(x)) - \frac{K}\delta |v'_{n_j}(x) - v'_{n_j+1}(x)|  - \frac1\delta (\rho_{n_j}-\rho_{n_j+1}).
\end{align}
So, we have just shown that a.e., 
  \begin{eqnarray}\label{already}
\displaystyle 0 \ge (v''_{n_j}(x) - v''_{n_j+1}(x)) - \frac{K}\delta |v'_{n_j}(x) - v'_{n_j+1}(x)|  - \frac{\rho_{n_j}-\rho_{n_j+1}}{\delta}.
\end{eqnarray}
The next trick is to note that again due to (\ref{also}) and $\rho_{n_j}\ge \rho_{n_j+1}$, and since $\delta \le a \le C$, 
\[
0 \stackrel{a.e.}{\ge} G[v_{n_j}] (x)-\rho_{n_j} \ge a_{n_j+1}(v''_{n_j} - v''_{n_j+1})(x) - C'|v'_{n_j}-v'_{n_j+1}|(x)- (\rho_{n_j}-\rho_{n_j+1}),
\]
which implies that with some $C, c>0$, 
\begin{equation}\label{vF}
0\stackrel{a.e.}{\ge} v''_{n_j} + F_1[v'_{n_j}, \rho_{n_j}]  \ge 
((v''_{n_j} - v''_{n_j+1}) - C|v'_{n_j} - v'_{n_j+1}|)  -c(\rho_{n_j}-\rho_{n_j+1}).
\end{equation}
Since $v_{n_j}'$ is absolutely continuous, we can integrate (\ref{vF}) to get the following: 
for any (not a.e.!) $x$ and $r$ with $x>r$,
\begin{eqnarray}
&\displaystyle 0\ge v'_{n_j}(x) - v'_{n_j}(r) + \int_r^x F_1[v'_{n_j}(s), \rho_{n_j}](s) \,ds
 \nonumber \\\nonumber \\
&\displaystyle  =\int_r^x \left(v''_{n_j}(x) + F_1[v'_{n_j}(s), \rho_{n_j}](s) \right) \,ds
 \nonumber \\\nonumber \\ \label{eF1}
&\displaystyle \ge 
\int_r^x ((v''_{n_j} - v''_{n_j+1})(s) - C|v'_{n_j} - v'_{n_j+1}|(s) - c(\rho_{n_j}-\rho_{n_j+1}))\,ds 
 \\\nonumber \\\nonumber 
&\displaystyle =v_{n_j}'(x) - v_{n_j}'(r) - v_{n_j+1}'(x) + v_{n_j+1}'(r) 
 \\\nonumber \\
&\displaystyle 
-  C\int_r^x |v'_{n_j} - v'_{n_j+1}|(s)ds 
- c (\rho_{n_j}-\rho_{n_j+1}) (x-r). \nonumber 
\end{eqnarray}
As it was explained earlier, due to the compactness in $C^1$ we may assume that 
$$
v_{n_j} \to \tilde v, \quad v'_{n_j} \to \tilde v', \quad \& \quad v'_{n_j+1} \to \tilde v' , \quad j\to\infty,
$$ 
in $C$ locally for some $\tilde v \in C^1$, as $j\to\infty$.  
Note that  $\tilde v'$ is absolutely continuous, which follows from the uniform local boundedness of $v_n''$. Therefore, 
it is possible to get to the limit in the inequality (\ref{eF1}) as $j \to  \infty$: for any $x>r$,
\[
\displaystyle 0\ge \tilde v'_{}(x) - \tilde v'(r) + \lim_{j\to\infty} \int_r^x F_1[s,v'_{n_j}(s), \rho_{n_j}] \,ds \ge  0, 
\]
since the right hand side in (\ref{eF1}) clearly goes to zero. 

Here
\begin{align*}
F_1[v'_{n_j}(s), \rho_{n_j}](s) = \inf_u \left[\frac{b^u}{a^u} v'_{n_j}(s) + \frac{f^u}{a^u}(s) - \frac{ \rho_{n_j}}{a^u}(s)\right] 
 \\\\
\to \inf_{u\in U}  \left[\frac{b^u}{a^u} \tilde v'(s) + \frac{f^u}{a^u}(s) - \frac{ \rho_n}{a^u}(s)\right]
= F_1[\tilde v'(s), \tilde\rho](s), \quad j \to \infty.  
\end{align*}
So, from (\ref{eF1}) we obtain the desired equation (\ref{133})
\[
\tilde v'(x) - \tilde v'(r) + \int_r^x F_1[s, \tilde v'(s), \tilde\rho]\,ds = 0. 
\]
In turn, since $F_1[\tilde v'(s), \tilde\rho](s)$ is continuous and absolutely continuous in $s$,  it implies $\tilde v \in C^2$, and by (well-defined) differentiation we get the equation~(\ref{1333}) for every $x\in \mathbb R$.

~

In the sequel it will follow from the uniqueness of solution to the Bellman's equation  that actually the whole sequence $v_n$ converges up to an additive constant sequence locally uniformly in $C^1$ to a single limit. However, it is not needed  for our proof. 

~

\noindent
{\bf 5}. {\em Uniqueness for $\rho$ in (\ref{ehjb}).} Assume that there are two solutions of the (HJB) equation, $(v^1, \rho^1)$ and $(v^2, \rho^2)$ with  $v^i\in {\cal K}$, $i=1,2$: 
\[
\inf\limits_{u\in U} (L^u v^1(x) + f^u(x) - \rho^1) = \inf\limits_{u\in U} (L^u v^2(x)+ f^u(x) - \rho^2) = 0.
\]
Earlier it was shown that both $v^1$ and $v^2$ are classical solutions with locally Lipschitz second derivatives. Let $w(x) := v^1(x) - v^2(x)$ and consider two strategies $\alpha_1, \alpha_2 \in {\cal A}$ such that 
$\alpha_1(x) \in \mbox{Argmax}_{u\in U} (L^u w(x))$ and $\alpha_2(x) \in  \mbox{Argmin}_{u\in U} (L^u w(x))$, and let $X^{1}_t, X^{2}_t$ be solutions of the SDEs corresponding to each strategy $\alpha_i$, $i=1,2$. Note that due to the measurable choice arguments -- see the Appendix -- such Borel strategies exist; corresponding weak solutions also exist. 
Let us denote
\[
h_1(x) :=\sup_{u\in U} (L^u w(x) - \rho^1 + \rho^2), \quad 
h_2(x) :=\inf_{u\in U} (L^u w(x) -  \rho^1 + \rho^2).
\]
Then, 
\begin{eqnarray*}
&h_2(x) 
 =  \inf_{u\in U} (L^u v^1(x) + f^u(x) - \rho^1 - (L^u v^2(x)+ f^u(x) - \rho^2))
 \\\\
&\le \inf\limits_{u\in U} (L^u v^1(x) + f^u(x) - \rho^1) - \inf\limits_{u\in U} (L^u v^2(x)+ f^u(x) - \rho^2) = 0.
\end{eqnarray*}
Similarly, 
\begin{eqnarray*}
&h_1(x) = - \inf_u (L^u (-v^2)(x) - \rho^2 + \rho^1) 
 \\\\
&=  -\inf_u (L^u v^2(x) + f^u(x) + \rho^2 - (L^u v^1(x)+ f^u(x) + \rho^1))
 \\\\
&\ge - \left[\inf_u (L^u v^2(x) + f^u(x) - \rho^2) - \inf_u (L^u v^1(x)+ f^u(x) - \rho^1)\right] = 0.
\end{eqnarray*}
We have,
\[
L^{\alpha_2} w(x)  = h_2(x) - \rho^2 + \rho^1,
\]
and
\[
L^{\alpha_1} w(x) = h_1(x)
- \rho^2 + \rho^1.
\]
Due to Dynkin's formula we have, 
\begin{eqnarray*}
\mathbb E_x w(X^1_t) - w(x) = \mathbb E_x\int_0^t L^{\alpha_1}w(X^1_s)\,ds 
 \\
= \mathbb E_x\int_0^t h_1(X^1_s)\,ds + (\rho^1-\rho^2)\,t \stackrel{(h_1\ge 0)}{\ge} (\rho^1-\rho^2)\,t.
\end{eqnarray*}
Since the left hand side here is bounded for  a fixed $x$, due to the Lemma \ref{lem2}  we get, 
\[
\rho^1 - \rho^2 \le 0.
\]
Similarly, considering $\alpha_2$ we conclude that
\begin{eqnarray*}
E_x w(X^2_t) - w(x) = E_x\int_0^t L^{\alpha_2}w(X^2_s)\,ds 
 \\\\
= E_x\int_0^t h_2(X^2_s)\,ds + (\rho^1-\rho^2)\,t.
\end{eqnarray*}
From here, due to the  boundedness of the left hand side (Lemma \ref{lem2}) we get,
\[
\rho^2 - \rho^1 = \liminf_{t\to 0}
(t^{-1}E_x\int_0^t h_2(X^2_s)\,ds) \stackrel{(h_2\le 0)}{\le} 0.
\]
Thus, $\rho^1 - \rho^2 \ge 0$ and, hence, 
\[
\rho^1=\rho^2.
\]

~

\noindent
{\bf 6}. {\em Why $\rho = \tilde \rho$?} Recall that for any initial $\alpha_0\in {\cal A}$, the sequence $\rho_n$ converges to the same value $\tilde \rho$, which is a unique component of solution of the equation (\ref{ehjb}). 
Let us take any $\epsilon >0$ and consider  a strategy $\alpha_0$ such that 
\[
\rho_0 = \rho^{\alpha_0} < \rho+ \epsilon.
\]
Since the sequence $(\rho_n)$ decreases, the limit $\tilde \rho$ must satisfy the same inequality, 
\[
\tilde \rho = \lim_{n\to\infty} \rho_n < \rho+ \epsilon.
\]
Due to uniqueness of $\tilde \rho$ as a component of solution of the equation (\ref{ehjb}) and since $\epsilon>0$ is arbitrary, we find that 
\[
\tilde \rho \le \rho.
\]
But also $\tilde \rho \ge \rho$ since $\tilde \rho$ is the infimum of the cost function values over a smaller -- just countable -- family of strategies. So, in fact, 
\[
\tilde\rho = \rho.
\]

~

\noindent
{\bf 7}. {\em Uniqueness for $V$.}  
Let us have another look at the earlier equations in the step 6, replacing $\rho^2-\rho^1$ by zero as we already know that the second component in the solution is unique:
\begin{eqnarray*}
\mathbb E_x w(X^1_t) - w(x)  = \mathbb E_x\int_0^t h_1(X^1_s)\,ds.
\end{eqnarray*}
Clearly, $h_1\ge 0$ with $h_1 \not =  0$ -- i.e.,  with $\Lambda(x: \, h_1(x) >0)> 0$ -- would imply that $\langle h_1, \mu_1\rangle \, > 0$, which contradicts a zero left hand side (after division by $t$ with $t\to\infty$). So, we conclude that
\[
h_1 = 0, \quad\mu_1-\mbox{a.s.}
\]
Since $\mu_1 \sim \Lambda$ due to (\ref{explicit}), by virtue of Krylov's estimate  we have  that $0\le \mathbb E_x\int_0^t h_1(X^1_s)\,ds \le N \| h_1\|_{L_{1}}= 0$. So, in fact, 
\begin{eqnarray}\label{ezero}
\mathbb E_x w(X^1_t) - w(x)  =  0. 
\end{eqnarray}

Further, from (\ref{ezero})
and due to the last statement of the Lemma \ref{lem2} it follows that 
\begin{eqnarray*}
w(x) = \lim_{t\to \infty} \mathbb E_x w(X^1_t) = \langle w, \mu_1\rangle. 
\end{eqnarray*}
Hence, $w(x)$ is a constant. Recall that uniqueness of the first component $V$ is stated up to a constant, and it was just established that  
\[
v^1(x) - v^2(x) = \mbox{const}.
\]

~

\noindent
{\bf 8.} 
Returning to the second statement of Theorem \ref{thm2}, note that due to uniqueness of the solution of the HJB equation, convergence of the whole sequence $(v_n)$ up to additive constants depending only on $n$ is to the unique limit $v$. 

~

\noindent
{\bf 9}. {\em Local Lipschitz for $\tilde v''$.} Recall that a certain additional regularity of the coefficients is assumed. We have from (\ref{1333}) and (\ref{vineq}), 
\[
|\tilde v''(x)| = |F_1[\tilde v'(x), \tilde\rho](x)| \le C (1+|\tilde v'(x)|) \le C(1+|x|).
\]
Therefore, it follows from the Cauchy Mean Value Theorem that 
\[
|\tilde v'(x) - \tilde v'(x')| \le  C(1+|x|^m+|x'|^m)|x-x'|. 
\]
So,  due to Lipschitz condition on $b^u,a^u$ in $x$ and in virtue of the nondegeneracy of $a^u$, 
\begin{align*} 
|\tilde v''(x) - \tilde v''(x')| = |F_1[\tilde v'(x), \tilde\rho](x) - F_1[\tilde v'(x), \tilde\rho](x')|
 \\\\
=  |\inf_u [\hat b^u(x) \tilde v'(x) + \hat f^u(x) -   \frac{\tilde\rho}{a^u(x)}] 
-  \inf_u [\hat b^u(x') \tilde v'(x') + \hat f^u(x') -   \frac{\tilde\rho}{a^u(x')}] |
 \\\\
\le  \sup_u  |\hat b^u(x) \tilde v'(x) + \hat f^u(x) -   \frac{\tilde\rho}{a^u(x)} 
-  \hat b^u(x')\tilde  v'(x') - \hat f^u(x') +   \frac{\tilde\rho}{a^u(x')}|
\\\\
\le C\left(|\tilde v'(x) - \tilde v'(x')| + |x-x'|\right) \le  C(1+|x|^m+|x'|^m)|x-x'|. 
\end{align*} 
The required local Lipschitz property of the function $\tilde v''$ has been verified.
 \end{proof}

\appendix
\section{On a measurable choice}
For the reader's convenience we repeat the main arguments from \cite{amv17} concerning the measurable choice a little bit more precisely. Recall that in the presentation of RIA in the beginning of the section 3 existence of a Borel measurable version of such a strategy was assumed,  which minimizes some function for any fixed $x$. 
In our case existence of such a Borel strategy can be justified by using Stschegolkow's
(Shchegolkov's) theorem \cite{Stschegolkow} (see also \cite[Satz 39]{Arsenin2}, or  \cite[Theorem 1]{BrownPurves}). According to this result, if any {\em section} of a (nonempty) Borel set $E$ in the direct product of two complete separable metric spaces is sigma-compact (i.e., equals a countable sum of closed bounded sets) then a Borel selection belonging to this set $E$ exists. 

In our case we have,  $F[v, \rho](x) = \inf_{u\in U} \left[L^{u}v(x)+f^{u}(x) -\rho\right]$. For a fixed $v$ representing any $v_n$ in the proof, denote $\chi(u,x):= L^{u}v(x)+f^{u}(x) -\rho$  
and $\bar \chi(x):=F[v, \rho](x)$, and let 
$E = \{(u,x): \chi(u,x) = \bar \chi(x)\}$. This set is nonempty because the minima here are attained for each $x$. 
Its section for any $x\in \mathbb R$ is 
$E_x := \{u: \chi(u,x) = \bar\chi(x)\}$. Any such section is nonempty and 
closed and, hence, Borel. Indeed, if $E_x \ni u_n \to u, \, n\to\infty$, then $\chi(u_n,x)\to \chi(u,x)$ due to the continuity of $\chi(\cdot,x)$. 

The set $E$ itself is Borel, too. To show this, take any $\epsilon>0$ and denote 
\[
E(\epsilon):= \{(u,x): \chi(u,x) - \bar \chi(x) < \epsilon\}.
\]
This set is Borel because the functions $\chi(u,x)$ and  $\bar \chi(x\}$ are: the latter one since the minimum in $\min_u\chi(u,x)$ can be taken over some countable dense subset of $U$. 
(Recall that the second derivative $v''$ is Borel measurable by our convention.) It remains to note that 
\[
E = \bigcap_{k=1}^\infty E(1/k), 
\]
so that $E$ is also Borel. 

Thus, Stschegolkow's theorem is applicable  and, hence, a Borel measurable improved strategy $\alpha_{n+1}$ in the induction step of the RIA does exist for each step $n$. By the same reason Borel strategies $\alpha_1$ and $\alpha_2$ exist in the steps 6 and (implicitly) 8.

\end{document}